\documentclass[12pt,reqno]{amsart}
\usepackage{amssymb}
\usepackage{enumerate}
\usepackage{graphicx}
\usepackage{slashbox}
\usepackage[colorlinks=true, linkcolor=blue, citecolor=magenta]{hyperref}

\usepackage{bm}

\usepackage[T1]{fontenc}
\usepackage[all]{xy}
\usepackage[usenames]{color}
\usepackage[dvipsnames]{xcolor}
\usepackage{algorithm}
\usepackage[noend]{algpseudocode}
\usepackage{tikz}
\usetikzlibrary{shapes,graphs,arrows.meta}

\setlength{\textwidth}{165mm}
\setlength{\oddsidemargin}{5mm}
\setlength{\evensidemargin}{0mm}
\setlength{\marginparsep}{3mm}
\setlength{\marginparwidth}{20mm}

\textheight=240truemm
\parskip=4truept
\headheight=7pt
\voffset-15mm
\pagestyle{myheadings}

\definecolor{darkgreen}{RGB}{65,165,118}
\definecolor{darkred}{RGB}{180,0,0}

\newcommand{\hadgesh}[1]{\emph{\color{darkred}#1}}


\newcommand{\higherlim}[2]{\displaystyle\setbox1=\hbox{\rm lim}
	\setbox2=\hbox to \wd1{\leftarrowfill} \ht2=0pt \dp2=-1pt
	\setbox3=\hbox{$\scriptstyle{#1}$}
	\def\test{#1}\ifx\test\empty
	\mathop{\mathop{\vtop{\baselineskip=5pt\box1\box2}}}\nolimits^{#2}
	\else
	\ifdim\wd1<\wd3
	\mathop{\hphantom{^{#2}}\vtop{\baselineskip=5pt\box1\box2}^{#2}}_{#1}
	\else
	\mathop{\mathop{\vtop{\baselineskip=5pt\box1\box2}}_{#1}}%
	\nolimits^{#2}
	\fi\fi}

\let\oldcirc=\circ
\renewcommand{\circ}{\mathchoice
    {\mathbin{\scriptstyle\oldcirc}}{\mathbin{\scriptstyle\oldcirc}}
    {\mathbin{\scriptscriptstyle\oldcirc}}
    {\mathbin{\scriptscriptstyle\oldcirc}}}

\SelectTips{cm}{10} \UseTips   

\numberwithin{equation}{section}

\mathcode`\:="003A
\mathchardef\cdot="0201

\setcounter{tocdepth}{2}

\renewenvironment{enumerate}[1][]
{\begin{enumerat}[#1]\setlength{\itemsep}{6pt}}{\end{enumerat}}

\renewenvironment{itemize}
{\begin{itemiz}\setlength{\itemsep}{6pt}\setlength{\itemindent}{-20pt}}
{\end{itemiz}}

\def\beq#1\eeq{\begin{equation*}#1\end{equation*}}
\def\beqq#1\eeqq{\begin{equation}#1\end{equation}}

\let\emptyset=\varnothing

\DeclareMathAlphabet\EuR{U}{eur}{m}{n}
\SetMathAlphabet\EuR{bold}{U}{eur}{b}{n}

\renewcommand{\:}{\colon}

\newlength{\upto}\newlength{\dnto}
\setlength{\upto}{9pt} \setlength{\dnto}{0pt}

\newcounter{let} \setcounter{let}{0}
\loop\stepcounter{let}
\expandafter\edef\csname cal\alph{let}\endcsname%
{\noexpand\mathcal{\Alph{let}}}
\ifnum\thelet<26\repeat

\newcommand{\tdef}[2][]{\expandafter\newcommand\csname#2\endcsname%
{#1\textup{#2}}}
\tdef{Iso}   \tdef{Aut}    \tdef{Out}    \tdef{Inn}    \tdef{Hom}
\tdef{End}   \tdef{Inj}    \tdef{map}    \tdef{Ker}    \tdef{Ob}
\tdef{Mor}   \tdef{Res}    \tdef{Spin}   \tdef{Id}     \tdef{Fr}
\tdef{rk}    \tdef{conj}   \tdef{incl}   \tdef{proj}   \tdef{diag} 
\tdef{trf}   \tdef{Sol}    \tdef{He}     \tdef{Sz}     \tdef{cj}
\tdef{free}  \tdef{ev}     \tdef{Map}    \tdef{Rep}    \tdef{pr}
\tdef{supp}  \tdef{res}    \tdef{hofibre}  \tdef{Vect}  \tdef{hofib}
\tdef{Coker} \tdef{Sym}
\tdef[_]{fc}  \tdef[_]{fn}  \tdef[_]{typ} \tdef[^]{op}   \tdef[^]{ab}

\newcommand{\fdef}[1]{\expandafter\newcommand\csname#1\endcsname%
{\mathfrak{#1}}}
\fdef{X}  \fdef{red}  \fdef{foc}  \fdef{hyp}  \fdef{Sub}

\newcommand{\bbdef}[1]{\expandafter\newcommand%
\csname#1\endcsname{\mathbb{#1}}}
\bbdef{C} \bbdef{F} \bbdef{R} \bbdef{Z} \bbdef{Q}  \bbdef{N}

\newcommand{\defeq}{\overset{\textup{def}}{=}}

\newtheorem{Thm}{Theorem}[section]
\newtheorem{Prop}[Thm]{Proposition}
\newtheorem{Cor}[Thm]{Corollary}
\newtheorem{Lem}[Thm]{Lemma}

\newtheorem{Defi}[Thm]{Definition}

\newcommand{\longleft}[1]{\;{\leftarrow%
\count255=0 \loop \mathrel{\mkern-6mu}%
    \relbar\advance\count255 by1\ifnum\count255<#1\repeat}\;}
\newcommand{\longright}[1]{\;{\count255=0 \loop \relbar\mathrel{\mkern-6mu}%
    \advance\count255 by1\ifnum\count255<#1\repeat\rightarrow}\;}

\newcommand{\RIGHT}[3]{\mathrel{\mathop{\kern0pt\longright{#1}}
    \limits^{#2}_{#3}}}

\newcommand{\LEFT}[3]{\mathrel{\mathop{\kern0pt\longleft{#1}}\limits^{#2}_{#3}}
}
\newcommand{\longleftright}[1]{\;{\leftarrow\mathrel{\mkern-6mu}%
    \count255=0\loop\relbar\mathrel{\mkern-6mu}%
    \advance\count255 by1\ifnum\count255<#1\repeat\rightarrow}\;}

\newcommand{\onto}[1]{\;{\count255=0 \loop \relbar\joinrel
    \advance\count255 by1
    \ifnum\count255<#1 \repeat \twoheadrightarrow}\;}

\newcommand{\RLEFT}[3]{\mathrel{%
   \mathop{\vcenter{\baselineskip=0pt\hbox{$\kern0pt\longright{#1}$}%
   \hbox{$\kern0pt\longleft{#1}$}}}\limits^{#2}_{#3}}}

\newtheorem{Rmk}[Thm]{Remark}
\newtheorem{Ex}[Thm]{Example}

\usepackage[usenames]{color}

\usepackage[usenames]{color}                                      %
\long\def\modif#1{{\color{black}#1}} 
 
\long\def\modifone#1{{\color{black}#1}}                           %
\long\def\modifdejan#1{{\color{black}#1}}                           %
                        %
               %

\newcommand{\xxto}[1]{\mathrel{\mathop{%
  \setbox0\hbox{$\ {\scriptstyle#1}\ $}%
  \hbox to \wd0{\rightarrowfill}}^{#1}}%
}

\newcommand{\xlto}[2][]{%
  \mathrel{\mathop{%
    \setbox0\vbox{
      \hbox{$\scriptstyle\;\;{#1}\;\;$}%
      \hbox{$\scriptstyle\;\;{#2}\;\;$}%
    }%
    \hbox to\wd0{\leftarrowfill}\displaystyle}%
  \limits^{#2}\ifx{#1}{}\else{_{#1}}\fi}%
}

\newlength{\wdth}


\newcommand{\indeg}{\operatorname{\mathrm{indeg}}}
\newcommand{\outdeg}{\operatorname{\mathrm{outdeg}}}
\newcommand{\sdeg}{\operatorname{\mathrm{sd}}}

\newcommand{\dr}{\operatorname{\mathrm{Dr}}}

\newcommand{\vin}{V_{\mathrm{in}}}
\newcommand{\vout}{V_{\mathrm{out}}}
\newcommand{\sgin}{\sigma_{\mathrm{in}}}
\newcommand{\sgout}{\sigma_{\mathrm{out}}}
\newcommand{\sgio}{\sigma_{\mathrm{in-out}}}

\raggedbottom

\author{Dejan Govc}
\address{Faculty of Mathematics and Physics, University of Ljubljana,  Ljubljana, Slovenia}
\email{dejan.govc@fmf.uni-lj.si}

\author{Ran Levi}
\address{Institute of Mathematics, University of Aberdeen, Aberdeen, UK}
\email{r.levi@abdn.ac.uk}

\author{Jason P. Smith}
\address{Department of Mathematics, Nottingham Trent University,  Nottingham, UK}
\email{jason.smith@ntu.ac.uk}


\title[Complexes of Tournaments]{Complexes of Tournaments, Directionality Filtrations and Persistent Homology}
\begin{document}
\begin{abstract}
Complete digraphs are referred to in the combinatorics literature as tournaments. We consider a family of semi-simplicial complexes, that we refer to as ``tournaplexes'', whose simplices are tournaments. In particular, given a digraph $\calg$, we associate with it a ``flag tournaplex'' which is a tournaplex containing the directed flag complex of $\calg$, but also the geometric realisation of cliques that are not directed. We define several types of filtrations on tournaplexes, and exploiting persistent homology, we observe that flag tournaplexes provide finer means of distinguishing graph dynamics than the directed flag complex. We then demonstrate the power of these ideas by applying them to graph data arising from the Blue Brain Project's digital reconstruction of a rat's neocortex.
\end{abstract}

\maketitle
\setcounter{figure}{0}
The idea of this article arose from considering topological objects associated to directed graphs, or \hadgesh{digraphs}. Digraphs have been playing an ever increasing role in the study of networks where connections between nodes have a prescribed direction. A notable example of such networks are neural networks, where the use of digraphs is particularly prevalent \cite{Rubinov-Sporns}. The digraphs we consider in this article are finite and simple  i.e., loop-free and any pair of distinct vertices may be connected by reciprocal edges, but not by more than one edge in the same direction.

An \hadgesh{$n$-clique} is a complete graph on $n$-vertices. Any undirected graph $\calg$ gives rise to a simplicial complex called the clique complex, or flag complex, where $n$-simplices are the $(n+1)$-cliques in $\calg$. The clique complex is a very common construction that can be found in  both theoretical and applied works and is highly prevalent in topological data analysis. However, in many contexts one needs to deal with digraphs rather than undirected graphs, in which case forgetting edge orientation will imply loss of information. One solution for this problem was proposed in \cite{Fund} with the introduction of the directed flag complex.

By analogy to the undirected case, a \hadgesh{directed $n$-clique} is a digraph, whose underlying undirected graph is an $n$-clique, and such that the orientation of its edges defines a linear order on its vertices. Given a digraph $\calg$, one associates with it the \hadgesh{directed flag complex} \cite{Fund}, that is, the complex whose $n$-simplices are the directed $(n+1)$-cliques in $\calg$. The directed flag complex was used successfully in \cite{Fund} as a way of gaining information about structural and functional properties of the Blue Brain Project reconstruction of the neocortical microcircuitry of a juvenile rat~\cite{Mega}. However,  while more naturally fitting for use with digraphs, this construction still raises two natural problems. 
The first is that, while edge orientation dictates which cliques are used in the construction of the complex, once the construction is done, it plays no further role in computations.
A second problem is that the directed flag complex does not give a proper topological representation to cliques that are not directed, i.e. those cliques whose vertices are not linearly ordered via the orientation of their edges. This leads to the central idea of this article, namely to the construction of a  complex built of all possible cliques in the digraph, while preserving partial information that is determined by edge orientation.

Finite digraphs whose underlying undirected graphs are cliques are referred to in the combinatorics literature as \hadgesh{tournaments}.  Tournaments have been studied since the 1940's by many authors from an applicable point of view \cite{Alway, KBS, Moran, Landau}, as well as theoretically \cite{Moon}. Tournaments naturally lend themselves to be considered as standard simplices, since an induced subgraph of a tournament is clearly also a tournament. This lead us to the idea that there is a natural family of complexes that are built out of tournaments. We refer to such complexes as \hadgesh{tournaplexes}. The directed flag complex of a digraph, and in fact any ordered simplicial complex is in particular a tournaplex. By analogy to the flag complex of a graph and the directed flag complex of a digraph, we introduce the \hadgesh{flag tournaplex} associated to a digraph. The flag tournaplex of a digraph $\calg$ contains the directed flag complex of $\calg$ as a subcomplex, as well as other naturally occurring sub complexes. 

A key advantage of the flag tournaplex, over other combinatorial constructions one can associate to a digraph, is that it invites the use of persistent homology \cite{EH}, where the filtration parameters arise naturally from the digraph in question, without any extra weights. Furthermore, this can be done in a number of different ways, which makes the flag tournaplex, at least in theory, a more powerful instrument by which one can study properties of digraphs and networks. The idea arose from considering a simple numerical invariant of tournaments that was introduced in \cite{Fund} - the \hadgesh{directionality} of a tournament.

In this paper we define three types of directionality, two that are local in nature and do not depend on an ambient tournaplex the tournament is a part of, and a third that does depend on the containing tournaplex. Local directionality invariants divide tournaments into equivalence classes, and since they are always defined by means of the orientation of the edges in a tournament, they could potentially provide an efficient dimension reduction method in studying  dynamics on a digraph. Since the number of non-isomorphic $n$-tournaments grows very rapidly with $n$, such a method is essential for any applicable purposes. However, in this work we concentrate on the filtrations of tournaplexes that arise from directionality invariants, examine the way in which certain simple examples behave with respect to those filtrations, and apply the technique to some more involved data sets.  
Thus, we aim to convince the reader that the flag tournaplex of a digraph, combined with its directionality invariants and persistent homology, are a powerful tool in the study of neuroscience, network theory, and in general any subject that involves digraphs.
Furthermore,  we show that these methods have the potential to reveal more information about a digraph than some other, more standard, topological tools.

After introducing the concept of a tournaplex in Section \ref{Sec-tournaments}, we proceed in Section \ref{Sec-directionality} by defining  several types of directionality. We mainly discuss three  types: local directionality, 3-cycle directionality and global directionality. We concentrate on the first two, which are closely related to each other, as demonstrated in Proposition \ref{Prop-directionality=D-8k}. We then discuss some basic properties of local and 3-cycle directionality, and in particular derive in Proposition \ref{Prop-expectation} a probabilistic formula for the expectation of tournaments of a given order (number of vertices) and local directionality in a random digraph. 

In Section \ref{Sec-filtrations} we show how directionality can be used to define  filtrations on tournaplexes. In particular we produce simple examples that show the potential these filtrations have to distinguish tournaplexes with the use of persistent homology in cases where the homotopy types of the corresponding geometric realisations are identical. We note that each isomorphism type of a tournament gives rise to a directionality invariant on tournaplexes. This is similar to the idea behind 3-cycle directionality, which is basically an invariant that associates with a tournament the number of regular 3-tournaments it contains (See Remark \ref{Rmk-different-filt}).
In particular we observe (Figures \ref{fig-sphere-abs-dir} and  \ref{fig-sphere-rel-dir}) that different directionality filtrations can be used in conjunction to achieve a greater separation power among tournaplexes. Furthermore, the variety of directionality filtrations on tournaplexes suggests the possibility of associating multi-parameter persistence modules to them. This of course raises the question whether a multi-parameter persistence module may contain more information about the tournaplex than each of the filtrations used to create it individually. We demonstrate by means of a simple example two 8-tournaments whose local directionality filtration and 3-cycle filtration each give isomorphic 1-parameter persistence modules, while taken together they are distinguished by their corresponding 2-parameter persistence modules. In these examples we do not compute the full persistence module structure (i.e., we don't compute the morphisms in the module), as the values at the bifiltration points suffice to separate the samples.

Finally in Section \ref{Sec-applications} we give two examples of applications, where we use the flag tournaplex of a digraph and the associated local directionality filtration as a classifier of certain families of graphs. 

The first family is one of random graphs generated by a specific regime that depends on two parameters $p$ and $q$. We show that applying the method to 200 such random graphs with a fixed value of $p$ and four different values of $q$, we obtain an almost perfect separation of the graphs into four families using a technique as simple as $k$-means clustering. This is used as a test case,  which shows that the technique we develop here is indeed sensitive to network structure. 

A second example is taken from data that was generated by the Blue Brain Project for the paper \cite{Fund}. Here we have response signals  of a digital reconstruction of neocortical tissue of a rat \cite{Mega} to a collection of stimuli that depend on two parameters and appear in 5 seeds (repeats of the same experiment). This gives 45 spike train datasets, which we then use to generate a family of 450 tournaplexes. Once more, $k$-means clustering is applied to separate the signals almost perfectly with respect to one of the two parameters and across all seeds.  In both cases we compare the performance of this technique to that of a similar approach using only the Betti numbers of the corresponding directed flag complex (as was done, using a much larger spiking dataset and without an attempt at classification,  in \cite{Fund}), and observe that using tournaplexes with local directionality filtration gives much better results. See Section \ref{Sec4.2} for more detail about the Blue Brain Project model, and the data used in this example.

We emphasise that the methods proposed in this article are applicable to any system or phenomenon that can be encoded on a digraph. In many such applications researchers consider \hadgesh{motifs}, namely subgraphs on a small number of vertices, as a means of characterising various graph regimes and graph dynamics. Tournaments are nothing but motifs whose underlying undirected graph is a clique, and can thus be considered as building blocks of any other motif. Directionality invariants give ways of controlling the distribution of tournaments in a digraph, without getting lost in the abundance of their isomorphism types, and the associated filtrations provide the means of studying persistent homology on a digraph with intrinsically defined weights. These features are independent of any specific application one may wish to apply the methods to, rather than only to neuroscience.

Most of our computations were carried out by purpose designed modification of the software package \textsc{Flagser} \cite{Flagser}, designed for computation of persistent homology of directed flag complexes. The package which we named \textsc{Tournser} is designed specifically for the computation and manipulation of flag tournaplexes associated to digraphs. \textsc{Flagser}  and \textsc{Tournser} are freely available at  \cite{Flagser-code} and \cite{Tournser-code}, respectively, and interactive online versions are available at~\cite{Flagser-live} and \cite{Tournser-live}. 

As a ``spin-off'' project, the first named author investigated the homotopy types that may arise as directed flag complexes associated to tournaments. He  showed in particular that this gives an invariant of tournaments that provides a finer partition within a given directionality class of tournaments \cite{Govc}. The methods in this paper were instrumental in finding the example in Section~\ref{Sec-filtrations} demonstrating that a multi-parameter persistence module of tournaplexes associated to two or more directionality invariants can be more informative than the the corresponding 1-parameter modules associated to each of them separately. 

The authors acknowledge support from EPSRC, grant  EP/P025072/ - ``Topological Analysis of Neural Systems'', and from \'Ecole Polytechnique F\'ed\'erale de Lausanne via a collaboration agreement with the second named author. \modif{We also thank J\=anis Lazovskis, Henri Riihim\"aki and Pedro Concei\c{c}\~ao for useful discussions. }
 

\section{Tournaments and Tournaplexes}
\label{Sec-tournaments}
We start by defining the basic objects of study and some of their properties. All digraphs considered in this article are assume to be finite and simple. By simple we mean \hadgesh{loop-free}, namely edges of the form $(v,v)$ are not allowed for any vertex $v$, and if $v$ and $w$ are two distinct vertices then a digraph may contain the \hadgesh{reciprocal} edges $(v,w)$ and $(w,v)$, but  two edges in the same orientation between $v$ and $w$  are not allowed.

If $\calg=(V,E)$ and $\calg'=(V',E')$ are digraphs, then a morphism $f\colon\calg\to\calg'$ is a pair of functions $(\phi, \psi)$, where $\phi\colon V\to V'$ and $\psi\colon E\to E'$, such that if $e=(v,v')$ is a directed edge in $\calg$, then $\psi(e) = (\phi(v),\phi(v'))$ is a directed edge in $\calg'$. If $\calg$ is any digraph,  then by  its \hadgesh{underlying undirected graph} we mean  the graph $\widehat{\calg}$ on the same vertices and edges, where edge orientation is ignored \modifdejan{(i.e. any directed edge or pair of reciprocal edges is replaced by a single undirected edge)}.

\begin{Defi}\label{D-tournament}
For a non-negative integer $n$, an  $n$-tournament is a digraph with no reciprocal edges whose underlying undirected graph is an $n$-clique.
\end{Defi}

We will generally denote tournaments by greek letters, $\sigma, \tau$ etc.

\begin{Defi}\label{Def-transitive-regular}
An $n$-tournament $\sigma$ is said to be 
\begin{itemize}
\item \hadgesh{transitive}, if its edge orientation defines  a total ordering on its vertex set,
\item  \hadgesh{semi-regular}, if for each vertex $v\in\sigma$ its in-degree and out-degree differ by at most 1, and 
\item  \hadgesh{regular} if for each vertex $v\in\sigma$ its in-degree and out-degree are equal.
\end{itemize}
\end{Defi}

Clearly a regular tournament must be odd, and a regular tournament is always semi-regular. A tournament containing a vertex whose out-degree and in-degree differ by exactly one must be even.  

Let $\sigma$ be an $n$-tournament. Then for any set of $k$ vertices in $\sigma$, the induced subgraph is a $k$-tournament. Such sub-tournaments will be referred to as the \hadgesh{faces of $\sigma$}.

We can now define complexes built out of tournaments in a way that is totally analogous to the definition of an ordinary abstract simplicial complex. 

\begin{Defi}\label{def-tournaplex}
A \hadgesh{tournaplex} is a  collection $X$ of tournaments, such that if $\sigma\in X$ and $\sigma'$ is a face of $\sigma$, then $\sigma' \in X$. 
\end{Defi} 

If $X$ is a finite tournaplex, then the \hadgesh{dimension of $X$} is the largest integer $n$, such that $X$ contains at least one $(n+1)$-tournament and no $k$-tournaments for $k>n+1$. Thus we will sometimes refer to $(n+1)$-tournaments as \hadgesh{$n$-dimensional}.

\subsection{Geometric realisation}
One may think of tournaplexes abstractly as  in Definition \ref{def-tournaplex}. It is also possible to associate a topological space, a chain complex and homology with a tournaplex. To do so, the simplest way is to think of tournaplexes as semi-simplicial sets.

\begin{Defi}\label{Def:semi-simplicial}
Let $X$ be a tournaplex together with a fixed linear ordering on its set of vertices (1-tournaments). Define a semi-simplicial set $X_\bullet$, whose $n$-simplices are the set $X_n$ of all $(n+1)$-tournaments in $X$.  For every tournament $\sigma\in X$, the vertices of $\sigma$ inherit a total order $v_0, v_1,\ldots, v_n$. For all~$n>0$ and $0\le i\le n$, define face operators $\partial_i\colon X_n\to X_{n-1}$, where  $\partial_i(\sigma)$ is the sub-tournament spanned by  all $v_j$ except $v_i$. 
\end{Defi}

The face operators $\partial_i$ clearly satisfy the usual simplicial face relations, and hence $X_\bullet$ is a well defined semi-simplicial set. Clearly, up to isomorphism of semi-simplicial sets, this does not depend on the choice of total ordering on $X_0$. 

\begin{Defi}\label{Def:geometric-realisation}
The \hadgesh{geometric realisation} of a tournaplex $X$ is defined to be the geometric realisation of the corresponding semi-simplicial set $X_\bullet$. 
\end{Defi}

The \hadgesh{chain complex }of a tournaplex $X$ and its \hadgesh{homology} are simply the usual chain complex and homology of the corresponding semi-simplicial set. 

We proceed with some naturally occurring examples.
\begin{Ex}\label{Ex-simp-cx}
Let $X$ be an abstract simplicial complex, and let $X_1$ denote the collection of its 1-simplices. Fix an arbitrary orientation on each simplex $\tau\in X_1$. Thus every simplex $\sigma\in X$ becomes a tournament, and  turns $X$ into a tournaplex. 
\end{Ex}

Notice that the homeomorphism type of the geometric realisation of the resulting tournaplex in Example \ref{Ex-simp-cx} is independent of the choice of orientations on the 1-simplices. It  is in fact just that of the original simplicial complex.

\begin{Ex}\label{Ex-ordered-simp-cx}
Let $X$ be an ordered simplicial complex, i.e. a collection of finite ordered sets that is closed under subsets. Then each ordered simplex $\sigma\in X$ can be thought of as a transitive tournament. Hence ordered simplicial complexes are special cases of tournaplexes in which every tournament is transitive. 
\end{Ex}

\begin{figure}[b]\label{fig:tournaplex}
	\includegraphics[width=0.7\textwidth]{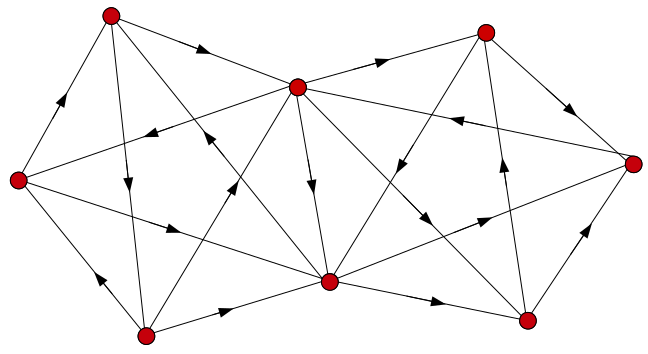}
	\caption{A digraph consisting of two 5-tournaments ``glued'' along a 1-face. The flag tournaplex (Definition \ref{Def-disimpicial-flag-cx}) of this graph consists of two 4-simplices glued along an edge, and as such is contractible. By contrast, the directed flag complex of the graph has the homotopy type of a wedge of two circles and a 2-sphere. However, the topology of the directed flag complex that is embedded in the flag tournaplex can be revealed using the 3-cycle filtration (see Section~\ref{Sec-filtrations}).}
\end{figure}

\newcommand{\tfl}{\mathrm{tFl}}
\newcommand{\dfl}{\mathrm{dFl}}

\begin{Defi}\label{Def-disimpicial-flag-cx}
Let $\calg$ be a digraph. The  \hadgesh{flag tournaplex} associated to $\calg$ is the tournaplex $\tfl(\calg)$, whose $n$-simplices are the subgraphs of $\calg$ that are $(n+1)$-tournaments. 
\end{Defi}

Notice that not every tournaplex is a flag tournaplex. For instance the boundary of a 2-simplex, with any edge orientation is a tournaplex because it consists of three 2-tournaments  and three  1-tournaments, but it is not a flag tournaplex because it does not contain the interior of the 2-simplex. However, just like the case of ordinary flag complexes, every tournaplex is contained as a sub-tournaplex in the flag tournaplex generated by its 1-skeleton. 

The \hadgesh{directed flag complex} associated to a digraph $\calg$ is the ordered simplicial complex~$\dfl(\calg)$ whose simplices are the transitive tournaments in $\calg$. Thus $\dfl(\calg)\subseteq \tfl(\calg)$, where equality holds if and only if every tournament in $\calg$ is transitive. Moreover, if $\calg$ has no reciprocal edges then $\tfl(\calg)$ is isomorphic to the flag complex of the underlying undirected graph.

We end this section with a comment on the number of tournaments up to digraph isomorphism. In low dimensions the numbers are small: the number of non-isomorphic tournaments on 2, 3, 4 and 5 vertices is 1, 2, 4, and 12, respectively. However the numbers grow rapidly with dimension with nearly 200,000 non-isomorphic tournaments on 9 vertices and nearly 10 million of them on 10 vertices. It is clear therefore that it may be useful to find invariants that can divide tournaments into a relatively small number of classes in a meaningful way. This is what we aim to do in Section~\ref{Sec-directionality}.


\section{Directionality invariants of tournaments}\label{Sec-directionality}
Given a digraph $\calg =(V,E)$, one may associate with each vertex $v\in V$ its signed directionality. This allows us to define certain integer valued functions on the set of simplices of a tournaplex. This in turn gives natural ways of filtering tournaplexes, and as such they become natural candidates for analysis by means of techniques of persistent homology. 

\begin{Defi}\label{def:directionality-functions}
Let $\calg = (V, E)$ be a digraph. For a vertex $v\in V$, define the \hadgesh{signed degree of $v$ in $\calg$}, by
\[\sdeg_\calg(v) = \indeg_\calg(v) - \outdeg_\calg(v).\] 
Let $U\subseteq V$ be a subset of vertices. 
\begin{enumerate}[i)]
	\item Define the \hadgesh{signed degree of $U$ relative to $\calg$} by 
	\[\sdeg_\calg(U) = \sum_{v\in U} \sdeg_\calg(v).\]
	\label{2.1i}
	\item Define the \hadgesh{directionality of $U$ relative to $\calg$} by
	\[\dr_\calg(U) = \sum_{v\in U} \sdeg_\calg(v)^2.\]
\end{enumerate}
\end{Defi}

Let $U$ be a finite set, and let $\R U$ be the real vector space generated by $U$, and let $\R U^*$ be the dual space of linear functionals $\R U \to \R$.   Equip $\R U^*$ with an inner product defined by  
\[(\alpha, \beta) = \sum_{v\in U} \alpha(v)\beta(v)\]
for $\alpha, \beta \in \R U^*$. If $U$ is the vertex set of a digraph $\calg$, then the functions $\indeg$, $\outdeg$ and $\sdeg$ can be extended to functionals 
$\indeg_\calg, \,\outdeg_\calg, \,\sdeg_\calg \in \R U^*$, and $\dr_\calg(U) = (\sdeg_\calg, \sdeg_\calg)$. In the norm on $\R U^*$ defined by the inner product, $\dr_\calg(U)$ is the square length of the functional $\sdeg_\calg$, or equivalently the square distance between the functionals $\indeg_\calg(U)$ and $\outdeg_\calg(U)$. Clearly, if $f\colon \calg\to\calh$ is an isomorphism of digraphs, and $W=f(U)$, then $\sdeg_\calg(U) = \sdeg_\calh(W)$ and $\dr_\calg(U)= \dr_\calh(W)$. We now use these constructions to define graph invariants on tournaments.

\begin{Defi}
For any $n$-tournament $\sigma$ in a graph $\calg$, let $V_\sigma$ denote the vertex set of $\sigma$, and define:
\begin{enumerate}[i)]
\item \hadgesh{Local directionality}:   $\dr(\sigma) = \dr_{\sigma}(V_\sigma)$.
\label{Directionality}

\item \hadgesh{3-cycle directionality}: Let  $c_3(\sigma)$ denote the number of regular 3-sub-tournaments in $\sigma$. 
\label{3-cycle}

\item \hadgesh{Global directionality:}  $\dr_\calg(\sigma) = \dr_\calg(V_\sigma)$.
\end{enumerate}
\label{Tournament-Inv}
\end{Defi}

\newcommand{\ii}{\mathrm{i}}
\newcommand{\oo}{\mathrm{o}}
\newcommand{\iis}{\mathrm{i}_\sigma}
\newcommand{\oos}{\mathrm{o}_\sigma}
\newcommand{\iisp}{\mathrm{i}_{\sigma'}}
\newcommand{\oosp}{\mathrm{o}_{\sigma'}}
\newcommand{\iit}{\operatorname{\ii_\tau}}
\newcommand{\oot}{\operatorname{\oo_\tau}}
\newcommand{\iitp}{\operatorname{\ii_{\tau'}}}
\newcommand{\ootp}{\operatorname{\oo_{\tau'}}}
\newcommand{\rel}{\mathrm{rel}}
\newcommand{\abs}{\mathrm{abs}}

Notice that local directionality and the 3-cycle directionality are both invariants of a single tournament, independent from the ambient graph containing it. Global directionality on the other hand takes into account the general connectivity of the ambient graph. Next, we observe that the local directionality of a tournament is strongly related to its 3-cycle directionality.

\newcommand{\maxdir}[1]{2{\binom{#1+1}{3}}}

\begin{Prop}\label{Prop-directionality=D-8k} Let  $\sigma$ be an $n$-tournament.  Then 
	\begin{equation}\dr(\sigma) =  \maxdir{n} - 8c_3(\sigma). \label{directionality=dr-8k}\end{equation}
\end{Prop}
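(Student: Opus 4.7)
The plan is to expand $\dr(\sigma)$ in terms of vertex out-degrees and then interpret the resulting sum combinatorially via a classical counting argument on 3-subtournaments.

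Concretely, fix an $n$-tournament $\sigma$ and, for each vertex $v\in V_\sigma$, write $d_v\coleq\outdeg_\sigma(v)$. Since $\sigma$ has no reciprocal edges and its underlying graph is an $n$-clique, $\indeg_\sigma(v)+\outdeg_\sigma(v)=n-1$, and hence $\sdeg_\sigma(v)=n-1-2d_v$. Squaring and summing gives
\[
\dr(\sigma)\;=\;\sum_{v\in V_\sigma}(n-1-2d_v)^2\;=\;n(n-1)^2\;-\;4(n-1)\sum_v d_v\;+\;4\sum_v d_v^2.
\]
First I would record the elementary identity $\sum_v d_v=\binom{n}{2}$, which holds because this sum counts the edges of $\sigma$. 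Then I would rewrite $\sum_v d_v^2$ in terms of $\sum_v\binom{d_v}{2}$ via $d_v^2=2\binom{d_v}{2}+d_v$, so that everything reduces to evaluating $\sum_v\binom{d_v}{2}$.

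The key combinatorial step is the classical observation: in any 3-subtournament $\tau$ of $\sigma$, either $\tau$ is transitive, in which case exactly one vertex has out-degree $2$ within $\tau$, or $\tau$ is regular (a $3$-cycle), in which case every vertex has out-degree $1$ within $\tau$. Counting pairs $(v,\tau)$ with $v$ dominating the other two vertices of $\tau$ in two different ways yields
\[
\sum_{v\in V_\sigma}\binom{d_v}{2}\;=\;\binom{n}{3}-c_3(\sigma),
\]
since the left hand side selects, for each vertex $v$, two of its out-neighbours, and this is precisely a transitive triple with apex $v$.

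Finally, substituting back and simplifying is routine algebra. Combining the above, one gets
\[
\dr(\sigma)\;=\;-n(n-1)(n-3)\;+\;8\bigl(\tbinom{n}{3}-c_3(\sigma)\bigr),
\]
and a short calculation rewrites the non-$c_3$ term as $\tfrac{n(n-1)(n+1)}{3}=2\binom{n+1}{3}$, yielding the claimed identity. I do not anticipate a real obstacle: the only non-trivial ingredient is the transitive-triple count above, which is standard and straightforward to verify from the definitions.
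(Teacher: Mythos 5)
Your proof is correct, and it takes a genuinely different route from the paper's. You argue globally: expanding $\dr(\sigma)=\sum_v(n-1-2d_v)^2$ in terms of the out-degree sequence and invoking the classical double count $\sum_v\binom{d_v}{2}=\binom{n}{3}-c_3(\sigma)$ (each transitive 3-subtournament has exactly one vertex of local out-degree 2, each 3-cycle has none), after which the identity follows by routine algebra; I checked the arithmetic, including your intermediate form $\dr(\sigma)=-n(n-1)(n-3)+8\bigl(\binom{n}{3}-c_3(\sigma)\bigr)$, and it is right. This is essentially the derivation via the first corollary to Theorem 4 of Moon that the paper explicitly mentions (and deliberately avoids) right after the proposition. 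The paper instead proves the statement by induction on $n$: it removes a vertex $v_0$, splits the remaining vertices into in- and out-neighbourhoods, and tracks exactly how $\sum_v\sdeg(v)^2$ and the 3-cycle count change when $v_0$ is reattached. Your argument is shorter and more self-contained; the paper's vertex-removal decomposition has the advantage of exhibiting the local mechanism by which directionality and 3-cycles trade off under adding a vertex, which is the same bookkeeping that underlies Corollary \ref{cor-directionality} and hence the proof that $W_{\dr}$ is a filtering weight function. Either proof is acceptable; yours just front-loads the combinatorics into the standard transitive-triple count rather than into an induction.
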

\begin{proof}
	For $n=3$ the tournament $\sigma$ is either transitive or regular. In the first case $\dr(\sigma) = 8$ and in the other $\dr(\sigma)=0$. Thus the claim follows.
	Proceed  by induction on $n$. Assume~(\ref{directionality=dr-8k}) holds for all $(n-1)$-tournaments and prove \modif{it holds} for $n$-tournaments. Choose a vertex $v_0\in\sigma$, and let $\sigma'$ denote the face of $\sigma$ corresponding to removing $v_0$. We split the vertices $V_{\sigma'}$ of $\sigma'$ into two parts: $\vin$ consisting of the $v\in V_{\sigma'}$ such that $[v_0\to v]\in\sigma$ and $\vout$ consisting of all vertices  $v\in V_{\sigma'}$ such that $[v\to v_0]\in\sigma$.
	
	Define $\sgin$ to be the subgraph of $\sigma'$ induced by $\vin$, and let $\sgout$ be the subgraph induced by $\vout$. Let $\sgio$ be the subgraph with vertex set $V_{\sigma'}$, and whose edges are incident to one vertex in $\vin$ and the  other in $\vout$. 
	
	Now, write:
	\begin{align*}
	\sum_{v\in V_{\sigma}}\sdeg_{\sigma}(v)^2&=
	\sum_{v\in\vin}( \sdeg_{\sigma'}(v)+1)^2+
	\sum_{v\in\vout}( \sdeg_{\sigma'}(v)-1)^2+(|\vout|-|\vin|)^2\\
	&=\sum_{v\in V_{\sigma'}}( \sdeg_{\sigma'}(v))^2+
	2\sum_{v\in\vin} \sdeg_{\sigma'}(v)-2\sum_{v\in\vout} \sdeg_{\sigma'}(v)+|V_{\sigma'}|+(|\vout|-|\vin|)^2.
	\end{align*}
	Note that $\sum_{v\in V_X}\sdeg_{X}(v)=0$ for any graph $X$ with vertex set $V_X$, since this is equivalent to sum of all in-degrees minus the sum of all out-degrees.
	So we can rewrite the second and third term as follows:
	\begin{align*}
	2\sum_{v\in\vin} \sdeg_{\sigma'}(v)-2\sum_{v\in\vout} \sdeg_{\sigma'}(v)&=2\sum_{v\in V_{\sigma'}} \sdeg_{\sigma'}(v)-4\sum_{v\in\vout} \sdeg_{\sigma'}(v)\\
	&=2\sum_{v\in V_{\sigma'}} \sdeg_{\sigma'}(v)-4\sum_{v\in\vout}\sdeg_{\sgout}(v)-4\sum_{v\in\vout}\sdeg_{\sgio}(v)\\
	&=-4\sum_{v\in\vout}\sdeg_{\sgio}(v)\\
	&=4|\vin||\vout|-8\ell,
	\end{align*}
	where $|\vin||\vout|$ arises as the number of all edges in $\sgio$ and $\ell$ is the number of edges $v_1\to v_2$ with $v_1\in\vin$ and $v_2\in\vout$. Therefore,
	\begin{align*}
	\dr(\sigma)=
	\sum_{v\in V_\sigma}\sdeg_{\sigma}(v)^2&=
	\dr(\sigma')-8\ell+|V_{\sigma'}|+(|\vout|-|\vin|)^2+4|\vin||\vout|\\
	&=\dr(\sigma')-8\ell+|V_{\sigma'}|+|V_{\sigma'}|^2.
	\end{align*}
	Using the inductive hypothesis, this simplifies to
	\[
	\dr(\sigma) = 2{\binom{n}{3}}  + n(n-1) - 8(c_3(\sigma') + \ell) = \maxdir{n}- 8(c_3(\sigma') + \ell).\]
	Finally,  observe that $\ell$ is precisely the number of  regular 3-tournaments in $\sigma$ that are  not already present in $\sigma'$, and the proof is complete.
\end{proof}

Note that Proposition~\ref{Prop-directionality=D-8k} can be derived from the first corollary to \cite[Theorem 4]{Moon}. However, we include the above proof as we feel it gives
a simpler combinatorial understanding of the link between local directionality and the  $3$-cycle directionality.

\begin{Cor}[{\cite[Supp. Meth. 2.1, Proposition 1]{Fund}}]\label{lem:transitive_tournament}
	Let $\sigma$ be an $n$-tournament. Then 
	\[\dr(\sigma) \le \maxdir{n},\]
	with equality obtained if and only if $\sigma$ is transitive.
\end{Cor}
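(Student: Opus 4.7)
The plan is to derive the corollary as a direct consequence of Proposition~\ref{Prop-directionality=D-8k}, which gives the identity $\dr(\sigma) = \maxdir{n} - 8c_3(\sigma)$. Since $c_3(\sigma)$ counts regular $3$-sub-tournaments, it is a non-negative integer, and the bound $\dr(\sigma) \le \maxdir{n}$ is immediate. Equality holds if and only if $c_3(\sigma) = 0$, so the entire content of the corollary reduces to showing that a tournament has no regular $3$-sub-tournament if and only if it is transitive.

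For the forward direction, suppose $\sigma$ is transitive, so that its edges linearly order $V_\sigma$. Any three vertices inherit this total order, so the induced $3$-sub-tournament is itself transitive and in particular not regular. Hence $c_3(\sigma) = 0$.

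For the converse, I would argue the contrapositive: if $\sigma$ is not transitive, then $\sigma$ contains a regular $3$-tournament (i.e.\ a directed $3$-cycle). Non-transitivity of $\sigma$ means the edge relation on $V_\sigma$ fails to be a strict total order. Since $\sigma$ is a tournament on finitely many vertices, the only way this can fail is by failure of transitivity of the edge relation: there exist vertices $u,v,w$ with edges $[u\to v]$ and $[v\to w]$ but not $[u\to w]$. Because $\sigma$ has no reciprocal edges, the edge between $u$ and $w$ must be $[w\to u]$, which together with the first two edges forms a directed $3$-cycle on $\{u,v,w\}$, a regular $3$-sub-tournament. Thus $c_3(\sigma) \ge 1$, and equality in the bound fails.

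I do not expect any serious obstacle: both implications are elementary, and the main work has already been done in Proposition~\ref{Prop-directionality=D-8k}. The only mildly subtle point is justifying that failure of transitivity of $\sigma$ as a tournament forces a $3$-cycle, which rests on the fact that between any two distinct vertices of a tournament exactly one directed edge is present.
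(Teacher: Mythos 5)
Your proof is correct and follows the same route the paper intends: the corollary is stated immediately after Proposition~\ref{Prop-directionality=D-8k} precisely because it follows from the identity $\dr(\sigma)=\maxdir{n}-8c_3(\sigma)$ together with the standard fact that a tournament is transitive if and only if it contains no directed $3$-cycle. Your spelled-out argument for that last equivalence (failure of transitivity of the edge relation forces a $3$-cycle since exactly one directed edge joins any two vertices) is exactly the right justification, which the paper leaves implicit.
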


\begin{Cor}\label{cor-directionality}
Let $\sigma$ be an  $n$-tournament, and let $\sigma'$ be a face of codimension 1. Then 
\[c_3(\sigma) - c_3(\sigma')\le \left(\frac{n-1}{2}\right)^2.\]
\end{Cor}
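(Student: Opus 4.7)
The plan is to read the desired inequality directly out of the bookkeeping already performed in the proof of Proposition \ref{Prop-directionality=D-8k}. Let $v_0$ be the vertex of $\sigma$ that is deleted to obtain the codimension one face $\sigma'$. Using the notation of that proof, partition the vertices of $\sigma'$ as $V_{\sigma'} = \vin \sqcup \vout$, where $\vin$ consists of vertices $v$ with $[v_0 \to v] \in \sigma$, and $\vout$ consists of vertices $v$ with $[v \to v_0] \in \sigma$. Let $\ell$ denote the number of edges $v_1 \to v_2$ in $\sigma$ with $v_1 \in \vin$ and $v_2 \in \vout$.

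The key observation, already made at the end of the proof of Proposition \ref{Prop-directionality=D-8k}, is that $\ell$ is precisely the number of regular $3$-tournaments in $\sigma$ that contain $v_0$, since such a triangle is formed by $v_0$ together with one vertex $v_1 \in \vin$ and one vertex $v_2 \in \vout$ with $v_1 \to v_2$. Every regular $3$-tournament in $\sigma$ either avoids $v_0$ (and so already lies in $\sigma'$) or contains $v_0$, hence
\begin{equation*}
c_3(\sigma) - c_3(\sigma') = \ell.
\end{equation*}

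It therefore suffices to bound $\ell$. Since the edges counted by $\ell$ are a subset of all edges between $\vin$ and $\vout$, we have $\ell \le |\vin|\cdot|\vout|$. Now $|\vin| + |\vout| = n-1$, so by the AM--GM inequality
\begin{equation*}
|\vin|\cdot|\vout| \le \left(\frac{|\vin|+|\vout|}{2}\right)^{\!2} = \left(\frac{n-1}{2}\right)^{\!2},
\end{equation*}
which yields the claimed bound. No step looks like a genuine obstacle: the ``hard work'' (identifying $\ell$ with the count of new regular $3$-tournaments) has already been done inside Proposition \ref{Prop-directionality=D-8k}, so the corollary is essentially a one line AM--GM estimate on top of it.
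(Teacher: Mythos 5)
Your proof is correct and follows essentially the same route as the paper: both identify the new $3$-cycles with edges from $\vin$ to $\vout$, bound their number by $|\vin|\cdot|\vout|$, and maximise that product subject to $|\vin|+|\vout|=n-1$ (the paper does this by maximising the quadratic $\ell(n-1-\ell)$, you by AM--GM, which is the same computation). The only cosmetic difference is that you cite the identification $c_3(\sigma)-c_3(\sigma')=\ell$ from the end of the proof of Proposition \ref{Prop-directionality=D-8k}, while the paper rederives it in place.
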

\begin{proof}
Let $\sigma'$ be an $(n-1)$-face of $\sigma$,
and let $v_0\in V_\sigma$ be the vertex not present in $V_{\sigma'}$. Partition $V_{\sigma'}$ into two disjoint subsets $\vin$ and $\vout$, where $\vin$ contains the vertices $v\in V_{\sigma'}$  such that the edge between $v_0$ and $v$ is oriented towards $v$, and $\vout=V_{\sigma'}\setminus\vin$. 
Set $\ell=|\vin|$, so $|\vout|=n-1-\ell$. Then the largest value of $c_3(\sigma)-c_3(\sigma')$, that is, the number of directed $3$-cycles in $\sigma$ that are not in $\sigma'$, is obtained if every edge between a vertex $v'\in\vin$ and a vertex $v''\in\vout$ is oriented from $v'$ to $v''$, and that number is $\ell(n-1-\ell) = -\ell^2 + (n-1)\ell$. This quadratic function of $\ell$ obtains its maximum exactly when $\ell=\frac{n-1}{2}$, and so the maximal number of directed $3$-cycles that can appear in $\sigma$ and are not present in $\sigma'$ is $\left(\frac{n-1}{2}\right)^2$.
\end{proof}

\begin{Ex}\label{ex-dr-not-filtration}
If $\sigma$ is an $n$-tournament and $\tau$ is a $k$-face of $\sigma$, then it is not true in general that $\dr(\tau)\le \dr(\sigma)$. For instance a regular 3-tournament has local directionality 0 but each of its 2-faces has local directionality 2. Similarly a semi-regular 4-tournament has local directionality 4 but has two 3-faces with local directionality 8. 
\end{Ex}

One natural question in view of Proposition \ref{Prop-directionality=D-8k}  is whether every number that can theoretically appear as the local directionality of a tournament, is indeed realisable as such. The answer to this question follows from three theorems by Kendall and Babington-Smith in their 1940 paper \cite[8.(1)-(3)]{KBS}. Using our terminology these results read as follows:
\begin{Thm}[\cite{KBS}]\label{Kendall-B-Smith}
	The following statements hold for an $n$-tournament $\sigma$:
	\begin{enumerate}[\rm i)]
		\item 		$c_3(\sigma)\le \begin{cases}
								\frac{n^3-n}{24} & n\; \text{odd}\\
								\frac{n^3-4n}{24} & n\; \text{even}
							  \end{cases}.$
		Furthermore, these bounds are sharp in the sense that there exists an $n$-tournament with this number of 3-cycles. 
		\label{KBS12}

		\item For any  integer $k$ between 0 and the upper bounds in \ref{KBS12}), there exists at least one  $n$-tournament $\sigma$ such that $c_3(\sigma) = k$.
	\end{enumerate}
\end{Thm}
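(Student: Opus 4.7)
The plan rests on the counting identity
\[ c_3(\sigma) = \binom{n}{3} - \sum_{v\in V_\sigma} \binom{\outdeg_\sigma(v)}{2}, \]
which I would establish by observing that each transitive $3$-subtournament of $\sigma$ has a unique vertex whose other two edges in the triple both point outward. Hence $\sum_v \binom{\outdeg_\sigma(v)}{2}$ counts the transitive triples exactly, while the total number of $3$-subtournaments is $\binom{n}{3}$.

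For part (i), maximising $c_3(\sigma)$ is equivalent to minimising $\sum_v \binom{\outdeg_\sigma(v)}{2}$ subject to the constraint $\sum_v \outdeg_\sigma(v) = \binom{n}{2}$. Since $x \mapsto \binom{x}{2}$ is strictly convex, a standard smoothing (majorisation) argument shows the minimum is attained when the out-degree sequence is as balanced as possible: for odd $n$, every $\outdeg_\sigma(v) = (n-1)/2$; for even $n$, half the vertices have out-degree $n/2$ and the other half have out-degree $n/2 - 1$. Direct substitution and simplification then yield the claimed bounds $(n^3-n)/24$ and $(n^3-4n)/24$ respectively. For sharpness I would exhibit explicit tournaments realising these degree sequences: the rotational Cayley tournament on $\Z/n$, with $i \to j$ iff $j - i \bmod n \in \{1,2,\ldots,(n-1)/2\}$, is regular when $n$ is odd; for even $n$, removing a single vertex from such a rotational $(n+1)$-tournament produces a tournament with exactly the near-balanced degree sequence above.

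For part (ii), I would argue by induction on $n$. The base $n = 3$ is immediate: the transitive and regular $3$-tournaments realise $c_3 = 0$ and $c_3 = 1$ respectively. For the inductive step I would use the vertex-adjoining construction already appearing in the proof of Proposition~\ref{Prop-directionality=D-8k}: given an $(n-1)$-tournament $\sigma'$ and a partition $(\vin, \vout)$ of $V_{\sigma'}$, adjoining a new vertex $v_0$ with $v_0 \to v$ for $v \in \vin$ and $v \to v_0$ for $v \in \vout$ produces an $n$-tournament $\sigma$ satisfying
\[ c_3(\sigma) = c_3(\sigma') + \ell, \]
where $\ell$ is the number of edges from $\vin$ to $\vout$ in $\sigma'$. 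Varying $\sigma'$ over the range afforded by the induction hypothesis, and varying the partition, one should cover every integer in the required interval.

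The main obstacle will be verifying that these combined ranges fill $[0,M_n]$ (where $M_n$ is the upper bound from part (i)) without gaps, since near the top of the range the admissible out-degree sequences become rigid and the freedom in the choice of $\sigma'$ and the partition is correspondingly limited. To close any residual gaps I would supplement the induction with a local edge-reversal argument: reversing a single edge $u \to v$ in a tournament changes $c_3$ by $|\{w : u \to w \to v\}| - |\{w : v \to w \to u\}|$, as a short case analysis of the triangle $\{u,v,w\}$ for each third vertex $w$ shows. Exhibiting, for any non-extremal tournament, an edge whose reversal changes $c_3$ by exactly $\pm 1$ produces a descent or ascent chain in unit steps and completes the argument.
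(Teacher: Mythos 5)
The paper itself offers no proof of this theorem: it is quoted from Kendall and Babington Smith \cite{KBS} (see also \cite{Moon}), so your argument has to stand on its own rather than be compared to anything in the text. Part (i) does stand: the identity $c_3(\sigma)=\binom{n}{3}-\sum_{v}\binom{\outdeg_\sigma(v)}{2}$ is correct (each transitive triple is counted exactly once, at its unique source), the convexity/smoothing step and the resulting algebra give precisely $\frac{n^3-n}{24}$ and $\frac{n^3-4n}{24}$, and the rotational tournaments on $\Z/n$ (respectively, one-vertex deletions of rotational tournaments on $\Z/(n+1)$) realise the balanced and near-balanced score sequences. This is the classical route and it is complete.

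Part (ii), as you yourself concede, is a plan with a hole rather than a proof: neither the vertex-adjoining induction nor the edge-reversal fallback is shown to cover every integer in $[0,M_n]$, and the induction in particular would require delicate bookkeeping near the extremes. The hole is genuine but your own counting identity closes it, making the induction unnecessary. Since $c_3$ depends only on the out-degree sequence, reversing an edge $u\to v$ changes $c_3$ by
\[
-\Bigl[\binom{\outdeg(u)-1}{2}-\binom{\outdeg(u)}{2}\Bigr]-\Bigl[\binom{\outdeg(v)+1}{2}-\binom{\outdeg(v)}{2}\Bigr]=\outdeg(u)-\outdeg(v)-1,
\]
consistent with your triangle-by-triangle computation. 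In particular, reversing an edge between two vertices of \emph{equal} out-degree decreases $c_3$ by exactly $1$. Such a pair exists whenever $c_3(\sigma)>0$: if all $n$ out-degrees were pairwise distinct they would be exactly $0,1,\dots,n-1$, whence $\sum_v\binom{\outdeg(v)}{2}=\binom{n}{3}$ and $c_3(\sigma)=0$. Starting from an extremal tournament of part (i) and descending in these unit steps therefore realises every value $M_n, M_n-1,\dots,1,0$, which is all of part (ii). I would replace your inductive argument and the unproven ``$\pm1$ edge exists'' claim with this short descent.
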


\begin{Cor}\label{Cor-realisability}
	For each pair of  non-negative integers $n$ and $k$, where $k$ is  at most as large as the bounds in Theorem \ref{Kendall-B-Smith}, there exists at least one $n$-tournament $\sigma$ such that $\dr(\sigma) = 2{\binom{n+1}{3}}-8k$. 
\end{Cor}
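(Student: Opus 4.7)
The plan is to deduce the corollary directly by chaining Theorem \ref{Kendall-B-Smith}(ii) with Proposition \ref{Prop-directionality=D-8k}; essentially no new combinatorics is needed.

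First, given $n$ and $k$ with $k$ bounded as in Theorem \ref{Kendall-B-Smith}(i), I would invoke Theorem \ref{Kendall-B-Smith}(ii) to produce an $n$-tournament $\sigma$ such that $c_3(\sigma) = k$. This is the only ``realisability'' input of the argument, and it handles all the non-trivial combinatorial content (the fact that $c_3$ takes every integer value from $0$ up to the Kendall--Babington-Smith bound).

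Next, I would apply Proposition \ref{Prop-directionality=D-8k} to the tournament $\sigma$ just produced. That proposition yields
\[
\dr(\sigma) \;=\; 2\binom{n+1}{3} - 8\,c_3(\sigma) \;=\; 2\binom{n+1}{3} - 8k,
\]
which is exactly the claimed value of the local directionality. Since $\sigma$ was constructed to have $c_3(\sigma)=k$, the same $\sigma$ witnesses the corollary.

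There is no real obstacle here beyond packaging the two cited results correctly. The only points to verify are that the bound on $k$ in the hypothesis of the corollary matches the bound in Theorem \ref{Kendall-B-Smith}(i) (so that Theorem \ref{Kendall-B-Smith}(ii) applies), and that Proposition \ref{Prop-directionality=D-8k} is stated for arbitrary $n$-tournaments (which it is). Hence the proof reduces to a single substitution.
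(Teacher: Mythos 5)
Your proposal is correct and is exactly the argument the paper intends: the corollary is stated without proof precisely because it follows immediately by combining Theorem \ref{Kendall-B-Smith}(ii) (realising $c_3(\sigma)=k$) with the formula of Proposition \ref{Prop-directionality=D-8k}. Nothing further is needed.
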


\begin{Cor}\label{Cor-min-dir} 
For any $n\geq 0$ there exists an $n$-tournament $\sigma$ of minimal local directionality that is regular if $\dr(\sigma)=0$ ($n$ odd)  and semi-regular if $\dr(\sigma)=n$ ($n$  even).
\end{Cor}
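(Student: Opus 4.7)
The starting observation is a parity constraint. For any vertex $v$ of an $n$-tournament, $\indeg(v)+\outdeg(v)=n-1$, so $\sdeg(v)\equiv n-1\pmod{2}$. Combined with the identity $\sum_{v}\sdeg(v)=0$, this pins down the minimal possible value of $\dr(\sigma)=\sum_v\sdeg(v)^2$ in each parity class. When $n$ is odd, every $\sdeg(v)$ is even, so $\dr(\sigma)\ge 0$ with equality iff $\sdeg(v)=0$ for all $v$, i.e., $\sigma$ is regular. When $n$ is even, every $\sdeg(v)$ is odd, so $|\sdeg(v)|\ge 1$, whence $\dr(\sigma)\ge n$ with equality iff $|\sdeg(v)|=1$ for every $v$, i.e., $\sigma$ is semi-regular (and not regular, which is impossible for even $n$ anyway).

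Having established the sharp lower bounds, what remains is existence. For odd $n$ the standard rotational tournament does the job explicitly: take vertex set $\Z/n$ and orient $i\to j$ precisely when $(j-i)\bmod n\in\{1,\ldots,(n-1)/2\}$. Every vertex then has in-degree and out-degree equal to $(n-1)/2$, so this tournament is regular and realises $\dr(\sigma)=0$.

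For even $n$ I would appeal to the realisability results rather than produce an explicit model. By Theorem~\ref{Kendall-B-Smith} there exists an $n$-tournament with $c_3(\sigma)=(n^3-4n)/24$, and substituting this value into the identity $\dr(\sigma)=2\binom{n+1}{3}-8c_3(\sigma)$ of Proposition~\ref{Prop-directionality=D-8k} and simplifying yields $\dr(\sigma)=n$, which saturates the parity bound. By the first paragraph, such a $\sigma$ is automatically semi-regular, and the corollary is complete.

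I do not anticipate a substantial technical obstacle; the only subtle point is conceptual. One must recognise that the target value $n$ in the even case is not arbitrary but is precisely the minimum forced by parity, and that it is realised exactly by the tournaments with the maximum possible number of regular $3$-subtournaments. Once this correspondence between minimising $\dr$ and maximising $c_3$ is made explicit via Proposition~\ref{Prop-directionality=D-8k}, both halves of the corollary reduce to a parity count plus a cited existence result (or an explicit cyclic construction in the odd case).
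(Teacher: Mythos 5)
Your proposal is correct and follows essentially the route the paper intends: the paper leaves this corollary unproved, presenting it as an immediate consequence of Theorem~\ref{Kendall-B-Smith} and Proposition~\ref{Prop-directionality=D-8k} (via Corollary~\ref{Cor-realisability}), which is exactly the derivation you give, supplemented by the parity lower bound that identifies the minimum and forces regularity/semi-regularity. The explicit rotational tournament in the odd case is a harmless bonus not needed for the argument.
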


\subsection{Distribution of tournaments  in digraphs by local directionality}
The motivation for introducing tournaplexes is the idea that tournaments are basic building blocks in a geometric object that can be associated to a digraph.  Rather than ignoring edge orientation and considering the ordinary flag complex, or neglecting cliques that are not linearly ordered  and considering the directed flag complex, the flag tournaplex allows us to consider all tournaments as simplices, and as such forms a topological object that is richer in structure, and better informing about properties of the digraph in question. \modif{See Figure \ref{fig:simple} for a simple example of how these three constructions differ.}
However, the vast number of isomorphism types of tournaments in higher dimensions makes using them as individual data units quite impractical. We have thus introduced directionality as one way of taking advantage of general tournaments, without the constraints imposed by the large number of isomorphism types. 

\begin{figure}[ht!]
	\centering
	\includegraphics[width=0.2\textwidth]{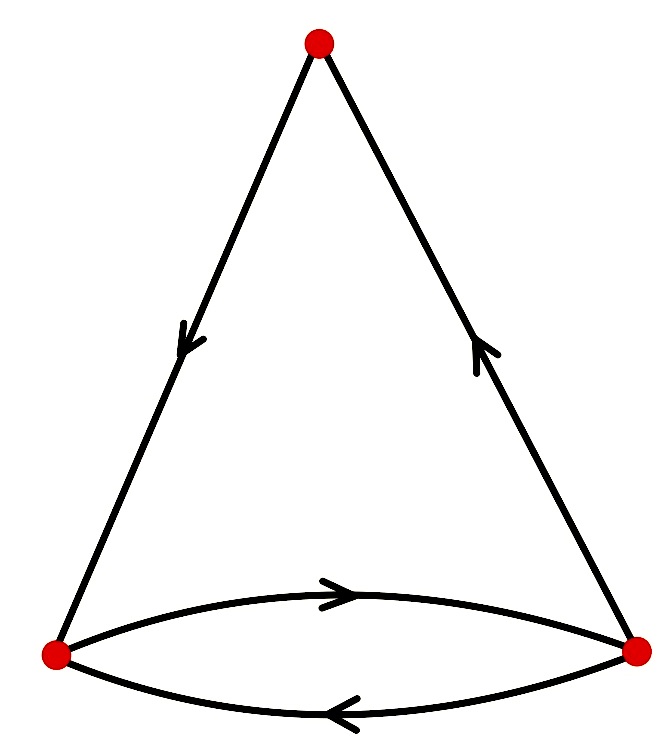}
	\caption{The flag complex of this digraph (ignoring orientation) is a 2-simplex. The directed flag complex is a 2-simplex with an additional arc on its bottom 1-face. The flag tournaplex is a cone formed of two 2-simplices attached on two of their edges. Its homotopy type can be distinguished from that of the flag complex by using the directionality of its 3-tournaments.}
\label{fig:simple}
\end{figure}

To demonstrate the potential of tournaments to inform on network structure, we examined a number of networks, and recorded the distribution of tournaments by local directionality in Figure \ref{fig:directionality-distribution}. In particular the probability of each possible value of directionality of a random  tournament in a fixed dimension is known up to $n=10$, by calculations of Alway \cite{Alway} and Kendall and Babington-Smith \cite{KBS}. More generally it was shown by Moran \cite{Moran} that the distribution of the number of 3-cycles in a random $n$-tournament tends to normal for $n$  sufficiently large (see also \cite[Theorem 6]{Moon}). All these results are stated in terms of the number of 3-cycles in a tournament. 

Let $T_{n,j}$ be the number of all $n$-tournaments $\sigma$ on a fixed vertex set, such that $c_3(\sigma)=j$.
 One can compute the  distribution of local directionality in $n$-tournaments and $T_{n,j}$ recursively \cite{Alway}.
 The following gives the said distributions for $n\le5$.

\begin{center}
\begin{tabular}{||c||c||c||c|c||c|c|c||c|c|c|c|c|c||}
\hline
n & 1 & 2 &3&  &4&&& 5&&&&&\\
\hline
$j$&0&0&0&1&0&1&2&0&1&2&3&4&5\\
\hline
$\dr(\sigma)$&0&2&8&0&20&12&4&40&32&24&16&8&0\\
\hline
$T_{n,j}$&1&2&6&2&24&16&24&120&120&240&240&280&24\\
\hline
\end{tabular}
\end{center}

Using $T_{n,j}$ we can express the expected number of tournaments of various $3$-cycle directionalities in Erd\H{o}s-R\'enyi graphs. \modifdejan{Recall that an} \hadgesh{Erd\H{o}s-R\'enyi digraph} \modifdejan{with $n$ vertices and connection probability $p$ is a random subgraph of the complete digraph with $n$ vertices, where each of the $n(n-1)$ possible directed edges is included with probability $p$ independently from every other directed edge.}

\begin{Prop}
\label{Prop-expectation}
Let $\calg$ be a directed Erd\H{o}s-R\'enyi graph with $n$ vertices and connection probability $p$. Let $X_k$ be the total number of $k$-tournaments that occur in $\calg$ and let $X_{k,j}$ be the total number of $k$-tournaments containing exactly $j$ $3$-cycles that occur in $\calg$. Then the expected values of $X_k$ and $X_{k,j}$ are given by the formulas
\[
E(X_k)=\binom{n}{k}2^{\binom{k}{2}}p^{\binom{k}{2}}\qquad\text{and}\qquad E(X_{k,j})=\binom{n}{k}T_{k,j}p^{\binom{k}{2}}.
\]
\end{Prop}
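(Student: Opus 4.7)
The plan is to prove both formulas by a standard linearity-of-expectation argument, writing the random variables $X_k$ and $X_{k,j}$ as sums of indicator variables over all potential tournaments on $k$-element vertex subsets.

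First, I would unpack the definition of $X_k$. A $k$-tournament in $\calg$ is determined by two choices: a $k$-element vertex subset $S \subseteq V(\calg)$, and a specific tournament $\tau$ on $S$ (that is, an orientation for each unordered pair in $S$). For each such pair $(S,\tau)$ let $I_{S,\tau}$ be the indicator random variable that is $1$ exactly when all $\binom{k}{2}$ directed edges of $\tau$ are present in $\calg$. Then
\[
X_k \;=\; \sum_{S,\,\tau} I_{S,\tau},
\]
where $S$ ranges over $k$-subsets of $V(\calg)$ and $\tau$ ranges over tournaments on $S$.

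Next, I would compute $E(I_{S,\tau})$. Since the $n(n-1)$ directed edges of $\calg$ are present independently with probability $p$, and the event $\{I_{S,\tau}=1\}$ requires precisely the $\binom{k}{2}$ edges of $\tau$ to be present (imposing no condition on the reversed edges, or on any edge with an endpoint outside $S$), one gets $E(I_{S,\tau}) = p^{\binom{k}{2}}$. The number of $k$-subsets is $\binom{n}{k}$ and the number of tournaments on a fixed $k$-subset is $2^{\binom{k}{2}}$, so linearity of expectation immediately yields
\[
E(X_k) \;=\; \binom{n}{k}\,2^{\binom{k}{2}}\,p^{\binom{k}{2}}.
\]

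The formula for $E(X_{k,j})$ is obtained by restricting the inner sum: instead of summing over all tournaments on $S$, one sums only over those $\tau$ with $c_3(\tau)=j$. The number of such $\tau$ on a fixed $S$ is by definition $T_{k,j}$, while the probability that a given such $\tau$ is present is still $p^{\binom{k}{2}}$, so the same argument gives $E(X_{k,j}) = \binom{n}{k} T_{k,j} p^{\binom{k}{2}}$. There is no real obstacle here; the only subtlety worth flagging explicitly is that the definition of a ``tournament occurring in $\calg$'' does not require the reverse of any edge of $\tau$ to be \emph{absent}, which is what allows the probability to factor as $p^{\binom{k}{2}}$ rather than something like $(p(1-p))^{\binom{k}{2}}$, and is consistent with the convention used in Definition~\ref{Def-disimpicial-flag-cx} of the flag tournaplex.
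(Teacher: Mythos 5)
Your argument is correct and is essentially identical to the paper's own proof: both decompose $X_k$ (resp.\ $X_{k,j}$) as a sum of indicator variables over all potential $k$-tournaments, compute the expectation of each indicator as $p^{\binom{k}{2}}$, and conclude by linearity of expectation. Your explicit remark that ``occurring'' does not require the reverse edges to be absent is a worthwhile clarification that the paper leaves implicit, and it is indeed consistent with Definition~\ref{Def-disimpicial-flag-cx}.
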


\begin{proof}
First note that the number of all possible $k$-tournaments on an $n$-vertex set is given by
\[
\binom{n}{k}2^{\binom{k}{2}},
\]
as each one is obtained by choosing $k$ out of $n$ vertices and then orienting each of the $\binom{k}{2}$ edges in one of two possible ways.

The number of all possible $k$-tournaments on an $n$-vertex set containing exactly $j$ $3$-cycles is given by
\[
\binom{n}{k}T_{k,j},
\]
as we have to choose $k$ vertices and then by definition $T_{k,j}$ is the number of tournaments on that vertex set containing exactly $j$ $3$-cycles.

Now, any specific $k$-tournament $\sigma$ will occur in $\calg$ with probability
\[
P(\sigma\subseteq \calg)=p^{\binom{k}{2}},
\]
as it has exactly ${\binom{k}{2}}$ edges, each of which occurs independently with probability $p$.
Let $Y_\sigma$ be the random variable which takes value $1$ if $\sigma$ occurs in $\calg$ and $0$ otherwise. It follows that
\[
E(Y_\sigma)=p^{\binom{k}{2}}.
\]
Note that $X_k$ is just the sum of $Y_\sigma$ over all possible $k$-tournaments $\sigma$. 
Similarly, $X_{k,j}$ is the sum of $Y_\sigma$ over all possible $k$-tournaments $\sigma$ containing exactly $j$ $3$-cycles.
Therefore, the result follows by linearity of expectation.
\end{proof}

We can consider $T_{k,j}$ as the theoretical distribution of $k$-tournaments $\sigma$ with $c_3(\sigma)=j$. In Figure \ref{fig:directionality-distribution} we show the distribution of tournaments in all values of local directionality in a number of sample networks. Notice the rather accurate estimates in the case of Erd\H{o}s-R\'enyi graphs. It is also worth noticing how close the distribution of local directionality values is between the connectivity graph of C. Elegans \cite{celegans} and that of the Blue Brain Project microcircuit simulating a section of the neocortex of a rat \cite{rat}. The bottom three datasets can be found in \cite{Mac, Gplus, Protein}. The bottom five datasets are available in \textsc{Tournser} format at~\cite{Abdn-NT}.

\begin{figure}[ht!]
	\centering
	\includegraphics[width=0.7\textwidth]{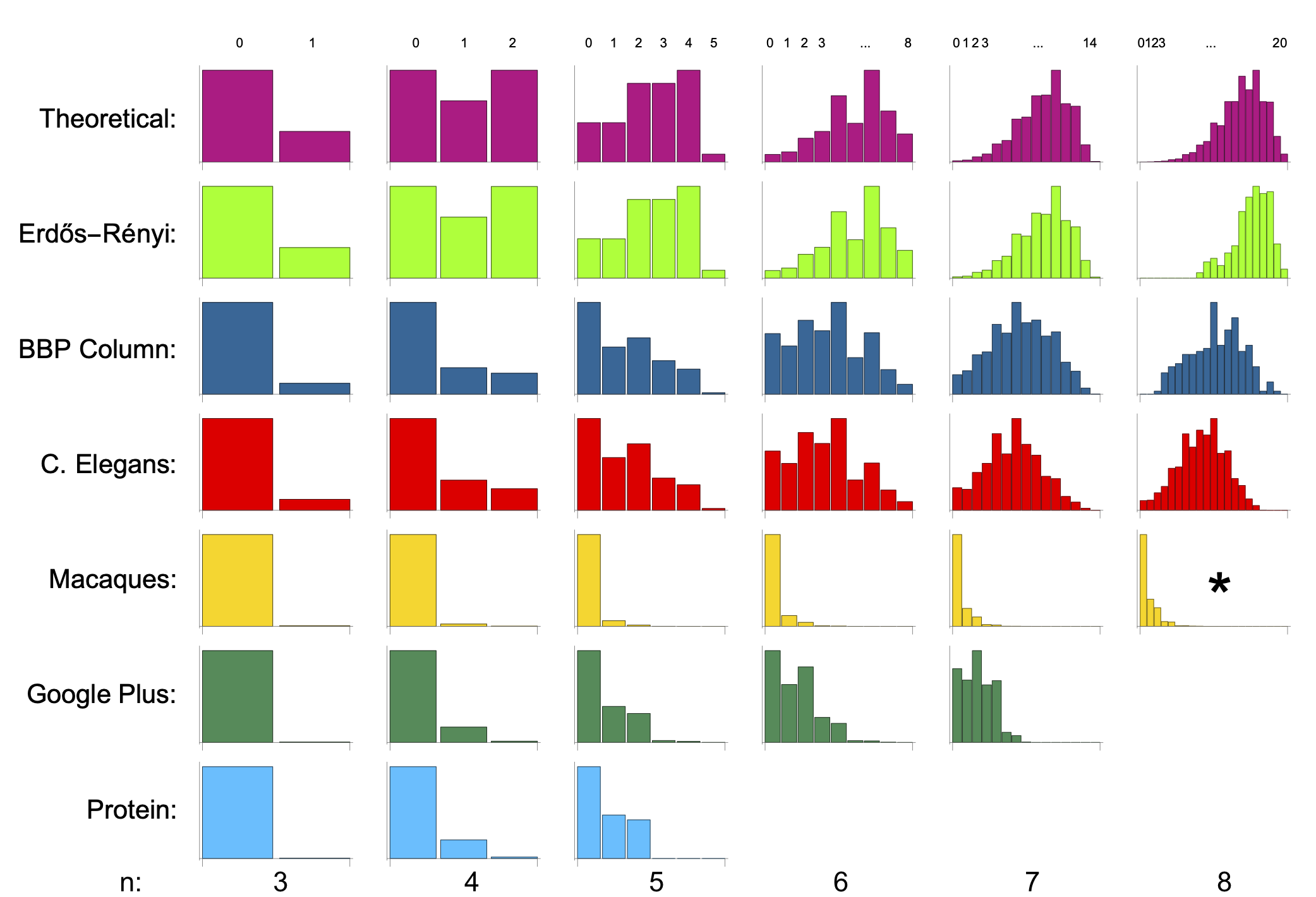}
	\caption{\modif{Distribution of tournaments by local directionality in various networks. The rows correspond to the graphs indicated on the left. The columns are ordered left to right by the number $n$ of vertices in a tournament for $n\geq 3$. Each histogram shows the distribution of $n$-tournaments by local directionality. The numbers on top refer to the number $c_3$  of 3-cycles in the tournaments represented by the relevant  column of the histogram. From this number local directionality can be computed as $2{n+1\choose 3} - 8c_3$. The asterisk indicates  the existence of further distributions to the right. }}
\label{fig:directionality-distribution}
\end{figure}


\section{Filtrations}
\label{Sec-filtrations}

In this section we discuss three  different ways to define a filtration on a tournaplex using the idea of directionality. 

Let $K$ be a tournaplex with vertex set $V$.  An \hadgesh{increasing filtration on $K$} is an increasing sequence of sub-tournaplexes 
\[\emptyset=F_0(K)\subseteq F_1(K)\subseteq F_2(K)\subseteq \cdots \subseteq F_n(K) = K.\]
An \hadgesh{increasing filtering weight function on $K$} is a function  $W\colon K\to\R$ with the property that if $\sigma$ is a tournament in $K$ and $\sigma'\subseteq \sigma$ is a face, then $W(\sigma')\le W(\sigma)$. Similarly one defines decreasing filtrations and decreasing filtering weight functions. 
	
An increasing filtering weight function $W$ on a tournaplex $K$ naturally gives rise to  increasing filtrations on $K$ as follows: Fix an increasing sequence of real numbers $r_1<r_2<\cdots<r_{n-1}$. Set $F_0(K)=\emptyset$, $F_n(K)=K$, and
\[F_i(K) = \{\sigma\in K\;|\; W(\sigma)\le r_i\},\]
for $1\le i\le n-1$.
Similarly a decreasing filtering weight function gives rise to a decreasing filtration on $K$. If the weight function on $K$ is a step function, the one has an obvious choice for  the sequence defining the filtration, namely the sequence of all possible values of the function in increasing order.
We now use local, 3-cycle and global directionality as ways of defining filtrations on any tournaplex. 

We start with local directionality. As  pointed out in Example \ref{ex-dr-not-filtration}, it is not generally the case that if $\sigma$ is a tournament and $\tau$ is a face of $\sigma$, then $\dr(\tau)\le\dr(\sigma)$. Thus one is led to make the following definition. 

\begin{Defi}\label{Def-local-dir-filt}
	Let $K$ be a tournaplex. For each $n$-tournament $\sigma\in K$ define the \hadgesh{local directionality weight of $\sigma$} to be 	
	\[W_{\dr}(\sigma) \defeq \dr(\sigma) + 2{\binom{n}{3}}.\]
\end{Defi}

\begin{Lem}\label{Lem-local-dir-filt-works}
Let $K$ be a tournaplex and let $W_{\dr}\colon K\to \N$ be the local directionality weight function. Then $W_{\dr}$ defines an increasing filtration on $K$.
\end{Lem}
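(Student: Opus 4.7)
The plan is to reduce to comparing an $n$-tournament with a codimension-$1$ face and then invoke the machinery already built in the paper.

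First I would observe that the face relation on tournaments is generated by the codimension-$1$ face relation: if $\sigma'$ is any face of $\sigma$, there is a chain $\sigma = \sigma_0 \supset \sigma_1 \supset \cdots \supset \sigma_k = \sigma'$ in which each step removes a single vertex. Consequently, to show $W_{\dr}(\sigma') \le W_{\dr}(\sigma)$ for every face, it suffices to prove it when $\sigma'$ has codimension $1$ in $\sigma$, i.e.\ $\sigma'$ is an $(n-1)$-tournament inside an $n$-tournament $\sigma$.

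Next I would rewrite $W_{\dr}$ via Proposition~\ref{Prop-directionality=D-8k}. Substituting $\dr(\sigma) = 2\binom{n+1}{3} - 8c_3(\sigma)$ gives
\[
W_{\dr}(\sigma) = 2\binom{n+1}{3} + 2\binom{n}{3} - 8c_3(\sigma),
\qquad
W_{\dr}(\sigma') = 2\binom{n}{3} + 2\binom{n-1}{3} - 8c_3(\sigma'),
\]
so the desired inequality $W_{\dr}(\sigma) - W_{\dr}(\sigma') \ge 0$ becomes
\[
2\binom{n+1}{3} - 2\binom{n-1}{3} \;\ge\; 8\bigl(c_3(\sigma) - c_3(\sigma')\bigr).
\]
A direct binomial identity (which I would verify in one line) gives $\binom{n+1}{3} - \binom{n-1}{3} = (n-1)^2$, so the inequality reduces to
\[
c_3(\sigma) - c_3(\sigma') \;\le\; \left(\frac{n-1}{2}\right)^2.
\]

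This is exactly the content of Corollary~\ref{cor-directionality}, which bounds the number of new regular $3$-tournaments created by adding a vertex to an $(n-1)$-tournament. Thus the codimension-$1$ case follows from the corollary, and by the reduction in the first paragraph $W_{\dr}$ is a bona fide increasing filtering weight function. The main (and only) nontrivial ingredient is the bound in Corollary~\ref{cor-directionality}; the additive shift by $2\binom{n}{3}$ in the definition of $W_{\dr}$ is precisely what is needed to absorb the dimension dependence in Proposition~\ref{Prop-directionality=D-8k} so that the raw directionality, which by Example~\ref{ex-dr-not-filtration} does \emph{not} define a filtration, becomes one after this shift.
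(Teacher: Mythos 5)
Your proposal is correct and follows essentially the same route as the paper's own proof: reduce to the codimension-$1$ case, rewrite $W_{\dr}$ via Proposition~\ref{Prop-directionality=D-8k}, and apply the bound $c_3(\sigma)-c_3(\sigma')\le\left(\frac{n-1}{2}\right)^2$ from Corollary~\ref{cor-directionality} together with the identity $\binom{n+1}{3}-\binom{n-1}{3}=(n-1)^2$. Your write-up merely makes explicit the reduction to codimension $1$ and the binomial identity that the paper leaves implicit.
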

\begin{proof}
It suffices to show that if $\sigma$ is an $n$-tournament for $n>2$ and  $\tau\subseteq\sigma$ is a face of codimension 1, then $W_{\dr}(\tau)\le W_{\dr}(\sigma)$. By Proposition \ref{Prop-directionality=D-8k} and Corollary \ref{cor-directionality}
\begin{align*}\label{Eq-Filtration}
W_{\dr}(\sigma)-W_{\dr}(\tau) \geq  & \;2\left({\binom{n+1}{3}}-{\binom{n-1}{3}}\right) -2(n-1)^2=0.
\end{align*}
The claim follows.
\end{proof}

Notice that it is possible for an $n$-tournament $\sigma$ to have a face $\tau$ of codimension larger than 1, whose directionality is larger than that of all codimension 1 faces. This justifies adding the maximal possible directionality of a codimension 1 face to the local directionality in order to create a filtration. See Figure \ref{fig-Tourn-Ex} for such an example.

\begin{figure}[ht!]
	\centering
	\includegraphics[width=0.3\textwidth]{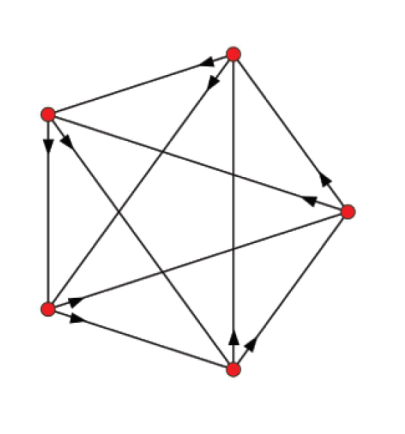}
	\caption{A regular 5-tournament ($\dr=0$) in which each 4-face is semi-regular ($\dr=4$). Hence each 4-face contains two 3-faces that are transitive~($\dr=8$).}
	\label{fig-Tourn-Ex}
\end{figure}

\begin{Defi}\label{Def-3c-dir-filt}
	Let $K$ be a tournaplex. For each $n$-tournament $\sigma\in K$ define the \hadgesh{3-cycle directionality weight of $\sigma$} to be $W_{c_3}(\sigma)=c_3(\sigma)$.
\end{Defi}

Clearly if $\sigma$ is a tournament and $\tau$ is a face of $\sigma$, then $c_3(\tau)\le c_3(\sigma)$, and hence this defines a filtration on any tournaplex.

\begin{Rmk}\label{Rmk-different-filt}
Observe that tournaplexes may be naturally filtered in many other ways that are similar to the 3-cycle filtration. For instance, if $K$ is a tournaplex, define $W_T\colon K\to \R$ by letting $W_T(\sigma)$ be the number of transitive 3-tournaments in $\sigma$.  Pushing this idea further, one can consider any fixed (small) tournament $\tau$, and filter $K$ by the number of times $\tau$ appears  as a face in each $\sigma\in K$. This automatically yields a filtration, and relates nicely to well known approaches that regard the prevalence of certain motifs in networks as a determining factor in the emerging dynamics (see for instance \cite{Schiller-et-al}).
\end{Rmk}

By Proposition \ref{Prop-directionality=D-8k}, the 3-cycle weight function and the local directionality weight function on an arbitrary $n$-tournament $\sigma$  are related by the formula:
\begin{equation}\label{local-vs-c3}
W_{\dr}(\sigma)  = {\binom{n}{2}}\frac{4n-2}{3} - 8W_{c_3}(\sigma).
\end{equation}
\modifone{Thus, these filtrations are related to each other by a naturally occurring monotone transform that is a function of simplex dimension. However, because of the intrinsic geometric data that gives rise to the filtrations,  the functions $W_{\dr}$ and $W_{c_3}$  are far from inducing similar filtrations, in spite of this close relationship. Specifically, the local directionality filtration of a tournaplex has its vertices in filtration 0, its edges and regular 3-tournaments in filtration 2  and is, roughly speaking,  a refinement of the dimension filtration.  The relationship between the 3-cycle filtration and dimension is more subtle. In filtration 0 one has the sub-tournaplex consisting of all transitive tournaments (in any dimension). In higher stages of the 3-cycle filtration the minimal possible dimension of an added simplex increases with the filtration value (a 3-tournament cannot have a 3-cycle filtration larger than 1), but the dimension of a simplex that is added on in any filtration value is not bounded above. }

In Figure \ref{fig-sphere-abs-dir} we have four digraphs whose flag tournaplexes realise the same homotopy type, but which are distinguished by the associated persistence diagrams with respect to local directionality filtration. Similarly it is easy to see that each of the four digraphs contains a different number of 3-cycles, and hence they are also distinguished by their persistence diagrams corresponding to their 3-cycle filtration. There are however examples of graphs that cannot be distinguished by local directionality or 3-cycle filtration, as in Figure~\ref{fig-sphere-rel-dir}. \modifone{Thus, a related question is whether there could be an advantage in using one of these filtrations as opposed to another, or if there is a point in using a combination of both. In Table \ref{2D-example} we show an example of the two non-isomorphic tournaments in Figure \ref{fig:2tourn}, where the 3-cycle filtration is always contractible (see the bottom row), and where the local directionality filtration is much more interesting (see the right column). This gives a simple answer to the first question. The second will be discussed further below.}

\begin{figure}[ht!]
	\centering
	\includegraphics[width=0.75\textwidth]{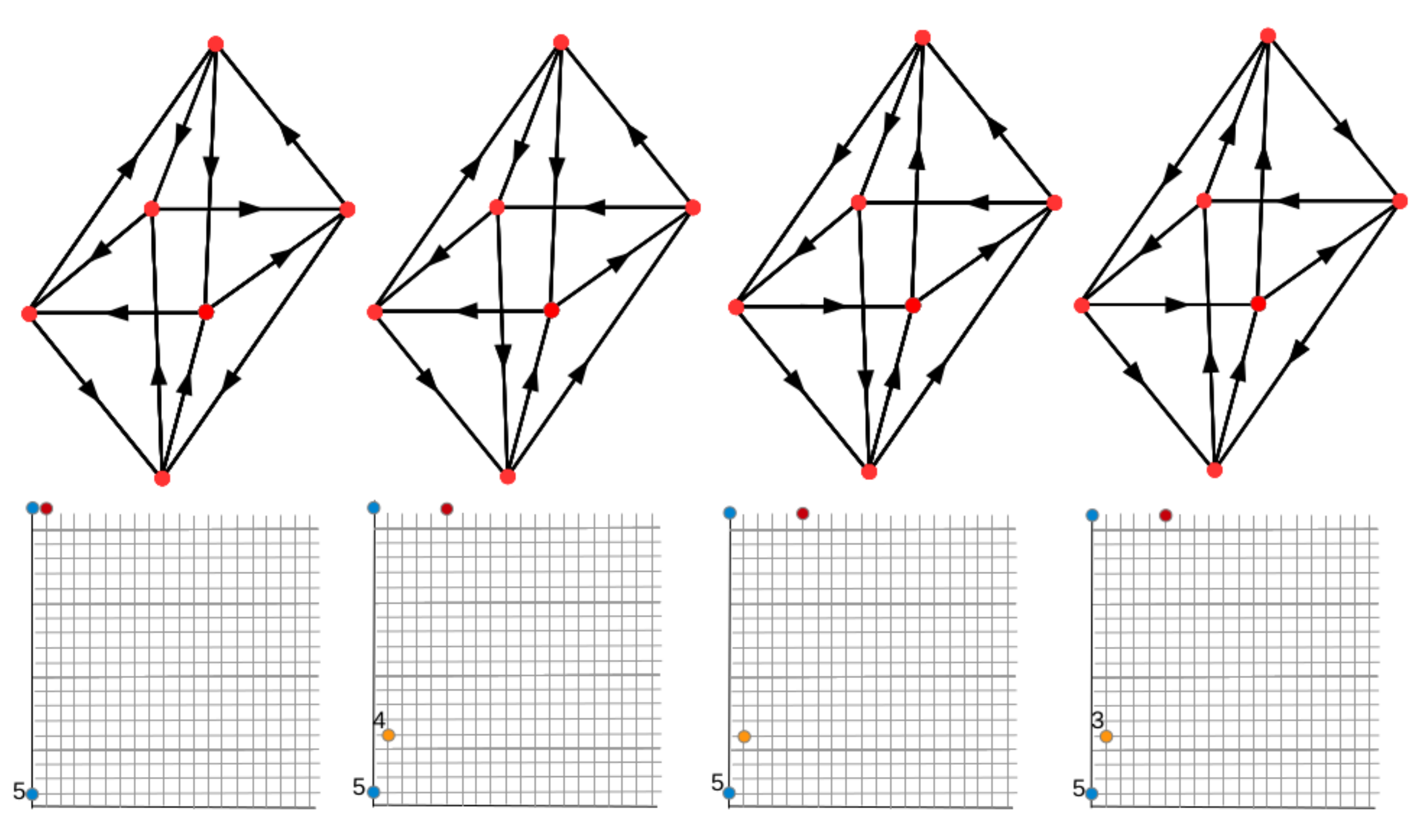}
	\caption{Four non-isomorphic digraphs and their local directionality filtrations reflected in their persistence diagrams. The signed degree of each vertex in each digraph is 0. Hence the global directionality filtration cannot distinguish these graphs. $H_0$, $H_1$ and $H_2$ are denoted in the diagrams by blue, orange and red dots, respectively, and the numbers next to some dots correspond to rank. All four tournaplexes have the homotopy type of a 2-sphere. These graphs can however be distinguished by the homotopy type of their directed flag complexes.}
\label{fig-sphere-abs-dir}
\end{figure}

\begin{Defi}\label{Def-global-dir-filt}
	Let $K$ be a tournaplex. For each tournament $\sigma\in K$, define the \hadgesh{global directionality weight of $\sigma$} to be $W_K(\sigma) = \dr_{K_1}(V_\sigma)=\sum_{v\in V_\sigma} \sdeg_{K_1}(v)^2$, where $K_1$ is the 1-skeleton of $K$ considered as a digraph.
\end{Defi}

\begin{figure}[ht!]
\centering
\includegraphics[width=0.75\textwidth]{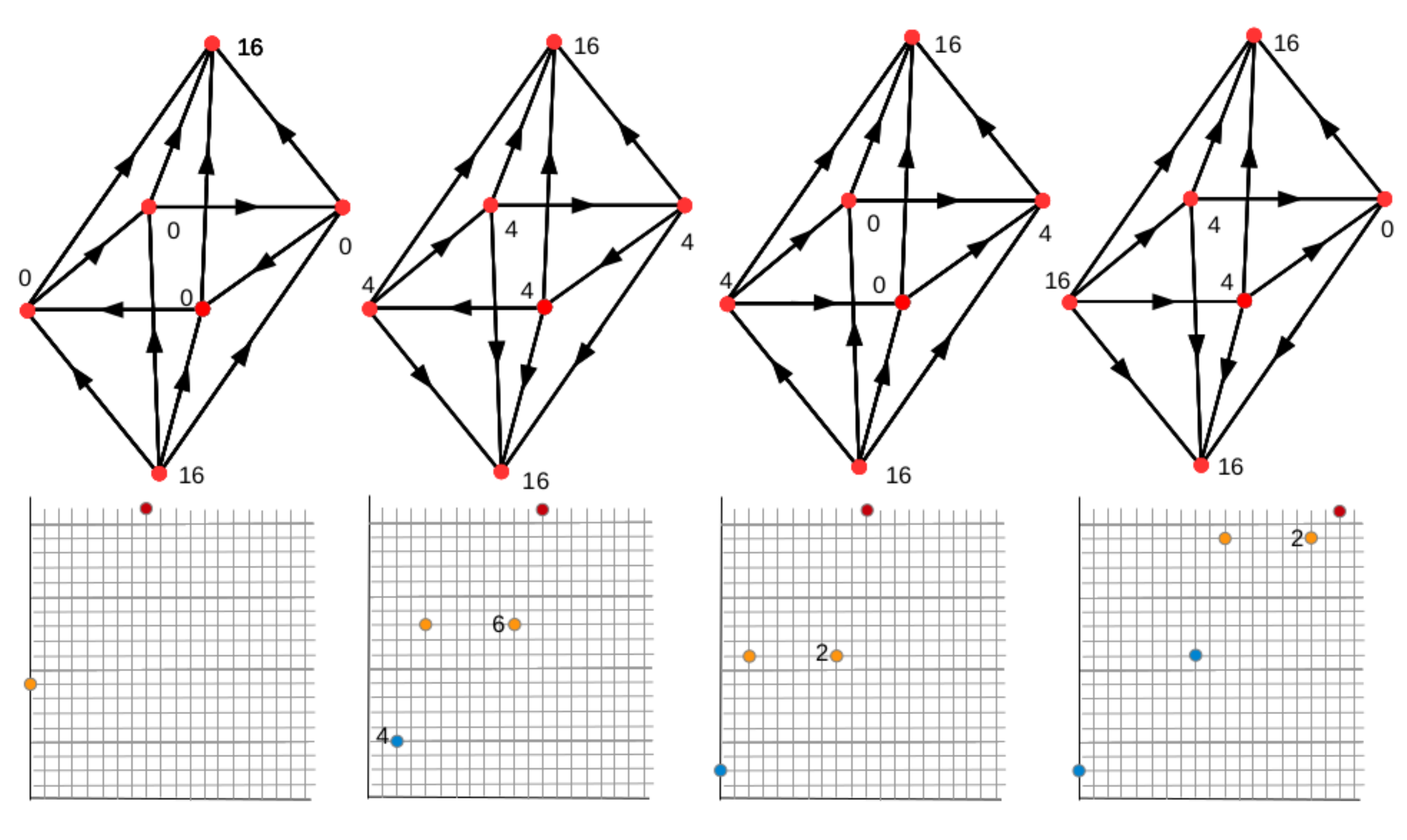}
\caption{Four non-isomorphic digraphs and their global directionality filtrations. The numbers on the vertices denote the corresponding squared signed directionality, and the chart below each digraph is the persistence diagram corresponding to the global directionality filtration on the corresponding tournaplex. $H_0$, $H_1$ and $H_2$ are denoted in the diagrams by blue, orange and red dots, respectively. The numbers next to some dots correspond to rank. Each 3-tournament in all four tournaplexes is transitive. Hence these graphs cannot be distinguished by local directionality filtration or by  3-cycle filtration. }
\label{fig-sphere-rel-dir}
\end{figure}

The global directionality weight function is clearly an increasing filtering function and as such induces an increasing filtration on a tournaplex. Its properties are quite different from the local and the 3-cycle directionality functions in that any positive integer value can be achieved as the global directionality of a tournament in some digraph. See  Figure \ref{fig-sphere-rel-dir} for an example of some digraphs whose flag tournaplexes can be distinguished by the global directionality filtration, but not by the local or the 3-cycle filtrations. On the other hand, the digraphs in Figure \ref{fig-sphere-abs-dir} cannot be distinguished by global directionality.

\begin{figure}[t]\begin{center}\begin{tikzpicture}\def\x{2}\def\y{2}
	\node (0) at (1*\x,0*\y){$0$};
	\node (1) at (0.7*\x,0.7*\y){$1$};
	\node (2) at (0*\x,1*\y){$2$};
	\node (3) at (-0.7*\x,0.7*\y){$3$};
	\node (4) at (-1*\x,0*\y){$4$};
	\node (5) at (-0.7*\x,-0.7*\y){$5$};
	\node (6) at (0*\x,-1*\y){$6$};
	\node (7) at (0.7*\x,-0.7*\y){$7$};
	\foreach \a/\b in {1/0,2/0,2/3,3/0,3/1,3/4,4/0,4/1,5/1,5/2,5/3,6/2,6/3,6/4,6/5,7/0,7/1,7/2,7/3,7/4,7/5,7/6} {
        \draw[black!25,-{Latex[length=3mm, width=2mm]}] (\a) to (\b);
    };
    \foreach \a/\b in {0/5,1/6,2/1,4/2,5/4,6/0} {
        \draw[black,-{Latex[length=3mm, width=2mm]}] (\a) to (\b);
    };
    \end{tikzpicture}
    \hskip 20pt
    \begin{tikzpicture}\def\x{2}\def\y{2}
    \node (0) at (1*\x,0*\y){$0$};
	\node (1) at (0.7*\x,0.7*\y){$1$};
	\node (2) at (0*\x,1*\y){$2$};
	\node (3) at (-0.7*\x,0.7*\y){$3$};
	\node (4) at (-1*\x,0*\y){$4$};
	\node (5) at (-0.7*\x,-0.7*\y){$5$};
	\node (6) at (0*\x,-1*\y){$6$};
	\node (7) at (0.7*\x,-0.7*\y){$7$};
	\foreach \a/\b in {1/0,2/0,2/3,3/0,3/1,3/4,4/0,4/1,5/1,5/2,5/3,6/2,6/3,6/4,6/5,7/0,7/1,7/2,7/3,7/4,7/5,7/6} {
        \draw[black!25,-{Latex[length=3mm, width=2mm]}] (\a) to (\b);
    };
    \foreach \a/\b in {0/5,1/6,2/1,4/2,5/4,6/0} {
        \draw[black,-{Latex[length=3mm, width=2mm]}] (\b) to (\a);
    };
\end{tikzpicture}\end{center}
\caption{The two tournaments $\calg_1$ (left) and $\calg_2$ (right), whose 2D-persistence module is given in Figure~\ref{2D-example}. With their common edges shaded grey, and differing edges in black.}\label{fig:2tourn}
\end{figure}
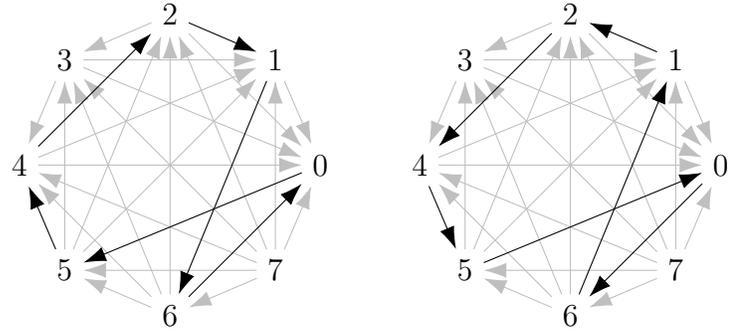

\begin{table}[t]
\begin{tabular}{|c||c|c|c|c|c|c|c|c|}
\hline
\backslashbox{\small{$W_{\dr}$}}{\small{$W_{c_3}$}}&$0$&$1$&$2$&$3$&$4$&$5$&$6$&$9$\\
\hline\hline
$0$&$*_{8}$&$*_{8}$&$*_{8}$&$*_{8}$&$*_{8}$&$*_{8}$&$*_{8}$&$*_{8}$\\
\hline
$2$&$1_{21}$&$1_{12}$&$1_{12}$&$1_{12}$&$1_{12}$&$1_{12}$&$1_{12}$&$1_{12}$\\
\hline
$10$&$2_{26}$&$2_{35}$&$2_{35}$&$2_{35}$&$2_{35}$&$2_{35}$&$2_{35}$&$2_{35}$\\
\hline
$12$&$2_{26}$&$2_{35}$&$2_{22}$&$2_{22}$&$2_{22}$&$2_{22}$&$2_{22}$&$2_{22}$\\
\hline
$20$&$2_{26}$&$2_{16}$&$2_{8}\vee 3_{5}$&$2_{8}\vee 3_{5}$&$2_{8}\vee 3_{5}$&$2_{8}\vee 3_{5}$&$2_{8}\vee 3_{5}$&$2_{8}\vee 3_{5}$\\
\hline
$28$&$3_{12}$&$3_{22}$&$3_{35}$&$3_{35}$&$3_{30}$&$3_{30}$&$3_{30}$&$3_{30}$\\
\hline
$36$&$3_{12}$&$3_{22}$&$3_{35}$&$3_{29}$&$3_{24}$&$3_{24}$&$3_{24}$&$3_{24}$\\
\hline
$44$&$3_{12}$&$3_{22}$&$3_{16}$&\textcolor{red}{$3_{11}\vee 4_1$} \tikz[overlay]{\draw[thick] (-.1,-0.16)--(0.1,0.35);} \textcolor{blue}{$3_{10}$}&$3_{8}\vee 4_{3}$&$3_{8}\vee 4_{3}$&$3_{8}\vee 4_{3}$&$3_{8}\vee 4_{3}$\\
\hline
$52$&$3_{12}$&$3_{8}$&$3_{4}\vee 4_{2}$&$3_{2}\vee 4_{6}$&$3_{2}\vee 4_{11}$&$3_{2}\vee 4_{11}$&$3_{2}\vee 4_{11}$&$3_{2}\vee 4_{11}$\\
\hline
$60$&$*$&$4_{4}$&$4_{10}$&$4_{16}$&$4_{21}$&$4_{21}$&$4_{21}$&$4_{21}$\\
\hline
$62$&$*$&$4_{4}$&$4_{10}$&$4_{16}$&$4_{21}$&$4_{21}$&$4_{18}$&$4_{18}$\\
\hline
$70$&$*$&$4_{4}$&$4_{10}$&$4_{16}$&$4_{21}$&$4_{19}$&$4_{16}$&$4_{16}$\\
\hline
$78$&$*$&$4_{4}$&$4_{10}$&$4_{16}$&$4_{14}$&$4_{12}$&$4_{10}\vee 5_1$&$4_{10}\vee 5_1$\\
\hline
$86$&$*$&$4_{4}$&$4_{10}$&$4_{10}$&$4_{8}$&$4_{6}$&$4_{4}\vee 5_1$&$4_{4}\vee 5_1$\\
\hline
$94$&$*$&$4_{4}$&$4_{4}$&$4_{4}$&$4_{2}$&$4_1\vee 5_1$&$4_1\vee 5_{4}$&$4_1\vee 5_{4}$\\
\hline
$102$&$*$&$*$&$*$&$*$&$5_{2}$&$5_{4}$&$5_{7}$&$5_{7}$\\
\hline
$110$&$*$&$*$&$*$&$*$&$5_{2}$&$5_{4}$&$5_{7}$&$5_{6}$\\
\hline
$134$&$*$&$*$&$*$&$*$&$5_{2}$&$5_{4}$&$5_{4}$&$5_{3}$\\
\hline
$142$&$*$&$*$&$*$&$*$&$5_{2}$&$5_{2}$&$5_{2}$&$5_1$\\
\hline
$150$&$*$&$*$&$*$&$*$&$*$&$*$&$*$&$6_1$\\
\hline
$208$&$*$&$*$&$*$&$*$&$*$&$*$&$*$&$*$\\
\hline
\end{tabular}
\vskip.1in
\caption{The homotopy types of all bifiltration stages of $\tfl(\calg_1)$ and $\tfl(\calg_2)$. Here $*_n$ represents the disjoint union of $n$ points, whereas $m_n$ represents the wedge of $n$ copies of the $m$-sphere. The homotopy types of $\tfl(\calg_1)$ and $\tfl(\calg_2)$ are in red and blue, respectively, and in black when they are the same. }
\label{2D-example}
\end{table}

The multitude of directionality filtrations on tournaplexes suggest that two or more of them can be used in conjunction \modifone{to create a combined filtration or a multi-parameter persistence module. We examined this idea with respect to local directionality and $3$-cycle filtrations. Let $\calg_1$ and~$\calg_2$ be the 8-tournaments depicted in Figure~\ref{fig:2tourn}.
The flag tournaplexes $\tfl(\calg_1)$ and $\tfl(\calg_2)$ have identical 1-parameter  persistence modules with respect to the $3$-cycle directionality filtration and with respect to the local directionality filtration. The 3-cycle filtration stages are all contractible (see the bottom row of Table~\ref{2D-example}). The local directionality filtration is more interesting and is  summarised in (\ref{Eq-homology})} \modifdejan{(cf. also the right column in Table~\ref{2D-example}) below. }

\begin{equation}
H_i(X)=\begin{cases}[0,2)^7\oplus[0,\infty);&i=0,\\
[2,10)^{12};&i=1,\\
[10,12)^{13}\oplus[10,20)^{14}\oplus[10,28)^8;&i=2,\\
[20,28)^5\oplus[28,36)^6\oplus[28,44)^{16}\oplus[28,52)^6\oplus[28,60)^2;&i=3,\\
[44,62)^3\oplus[52,70)^2\oplus[52,78)^6\oplus[60,86)^6\oplus[60,94)^3\oplus[60,102);&i=4,\\
[78,110)\oplus[94,134)^3\oplus[102,142)^2\oplus[102,150);&i=5,\\
[150,208);&i=6,\\
\end{cases}\label{Eq-homology}
\end{equation}
where $X$ is $\tfl(\calg_j)$ with local directionality filtration, for $j=1,2$. 

\modifone{However taking the two filtrations together yields two distinct 2-parameter persistence modules.  Considering the bifiltration of the geometric realisation of each tournaplex, and applying the homotopy theoretic techniques  used in~\cite{Govc}, we were able to compute the homotopy types of each bifiltration pair which are displayed in Figure~\ref{2D-example}. The difference between $\tfl(\calg_1)$ and $\tfl(\calg_2)$ reveals itself in bifiltration $(44,3)$. In both cases we did not compute the full persistence module structure (namely the maps between bifiltration stages).  The point of this calculation was to show that there are examples that are not distinguishable by local directionality or by 3-cycle filtration alone, but a combination of the two does separate them. Similarly one can construct a 1-parameter filtration by taking a function of two variables combining the two filtration functions.  For example, in our case, consider 
\[f(\sigma) \defeq \max\{3W_{\dr}(\sigma), 44 W_{c_3}(\sigma)\}.\]
Then $f(\sigma)\leq 132$ if and only if $W_{\dr}(\sigma)\leq 44$ and $W_{c_3}(\sigma)\leq 3$.
It is then easy to check, given the data in Table \ref{2D-example}, that the filtration function $f$ will distinguish the tournaments in Figure \ref{fig:2tourn} by 1-parameter persistence.}


\section{Applications}
\label{Sec-applications}
The flag complex associated to a graph reveals higher order properties of the graph as they are encoded in its topology. Similarly, the directed flag complex does the same for digraphs. However, as already pointed out, the directed flag complex ``sees'' tournaments that are not transitive in the graph only by neglecting them, and as such they may or may not be expressed as homology classes in the resulting complex. We introduced the flag tournaplex, motivated exactly by the idea that it will contain more information about the digraph in question than the directed flag complex. This point is clear even without any empirical evidence since in any tournaplex $X$, the subcomplex of transitive tournaments appears as the $0$-th stage in the $3$-cycle filtration. Thus if $X$ is a flag tournaplex, then any higher filtration can only add on the information present in the directed flag complex. In this section we demonstrate by several examples that this is indeed the case.

We consider the flag tournaplexes arising from two data sets. The first is a set of directed Erd\H{o}s-R\'enyi type digraphs, and the second is a collection of activity simulation data from the Blue Brain Project that was used in \cite{Fund}. In both cases we consider a set of digraphs divided into subsets by some known parameters, and examine the capability of the topological metrics one can associate to flag tournaplexes with the local directionality filtration to cluster the data into distinct classes, and compare the performance to that of the metrics associated to the directed flag complex. 
 
Given a collection $\mathbb{G}$ of digraphs we wish to partition these graphs into groups of graphs with similar properties.
 In order to do this we must first map each graph ${\calg\in\mathbb{G}}$ to a feature vector $V(\calg)$ of length  $k$ for a suitable positive integer $k$. We shall produce such a vector by two methods. The first is based on a computation of the Betti numbers of directed flag complexes of the graphs in question. The second uses the persistent homology of the flag tournaplex $\tfl(\calg)$ with the local directionality filtration.
 Each method produces a matrix with as many rows as the number of graphs to be clustered and $k$ columns. The rows of these matrices are fed into a $k$-means clustering algorithm to produce our results, as explained below. 
 
\begin{Defi} 
 For a digraph $\calg$, represent the persistent homology of 
 the flag tournaplex $\tfl(\calg)$  with respect to the local directionality filtration as a list of triples $(m,b,d)$, corresponding to  $(dimension, birth, death)$. Let $T^\ell(\calg)$ denote the resulting data set.
 Set $\widehat{T}^\ell(\calg)$ as the set of the unique triples in $T^\ell(\calg)$ and $N_{\calg}(m,b,d)$ as the number of times the triple $(m,b,d)$ appears in $T^\ell(\calg)$.
\end{Defi}

\subsection{Erd\H{o}s-R\'enyi type digraphs with varying local directionality distributions}   
\modifone{In this section we describe a validation experiment we ran where we attempt to show that using local directionality filtration and persistent homology provides greater discriminatory  power than that of Betti numbers of the associated directed flag complex for a fixed artificial dataset.} This is a data set of random graphs denoted $\calg(n,p,q)$ where $n$ is the number of vertices, indexed $1,\ldots, n$, and $p$ and $q$ are probability parameters. An instantiation of $\calg(n,p,q)$ is generated by adding a directed edge $(i,j)$ with probability 
$$\begin{cases}
p,&\mbox{if }i>j\\
q,&\mbox{if }i<j\end{cases}.$$
By varying the probability for different orientations we can control the frequency of tournaments with different local directionality. 

We constructed 50 instantiations in 4 groups with $n=250$, $p=0.25$ and $q = 0, 0.025, 0.05$ and $0.075$, a total of 200 graphs, which we denote by $\mathbb{G}_{ER}$. We use small values for $q$ so that the number of transitive tournaments remains largely unchanged, thus causing minimal difference to the directed flag complexes. To check that this procedure achieved different distributions of local directionality, we computed the average distribution by dimension and directionality. The result is summarised in Figure \ref{fig-ER-Directionality-Dist}.
We then applied Algorithm \ref{alg1} to produce the matrix $V^{6}_\calt(\mathbb{G}_{ER})$.

\begin{Rmk}
\modif{The aim of Algorithm 1 is to use persistent homology of tournaplexes to extract the key functional information from the spike trains and express said data as a feature vector that can be used for classification. We begin by applying persistent homology and expressing the resulting information as a vector, this is only possible since we know the directionality filtration has a finite number of possible values. However, the dimension of the resulting vector is too high to apply clustering algorithms. So the remaining steps of Algorithm 1 are a form of feature selection designed to extract the most important entries of the vector. This is done by using standard deviation and selecting the filtration value and time bin pairs which vary the most, the intuition being that these entries are the most likely to vary between input, thus giving the best classification accuracy.}
\end{Rmk}

\begin{figure}[ht!]
\centering
\includegraphics[width=1\textwidth]{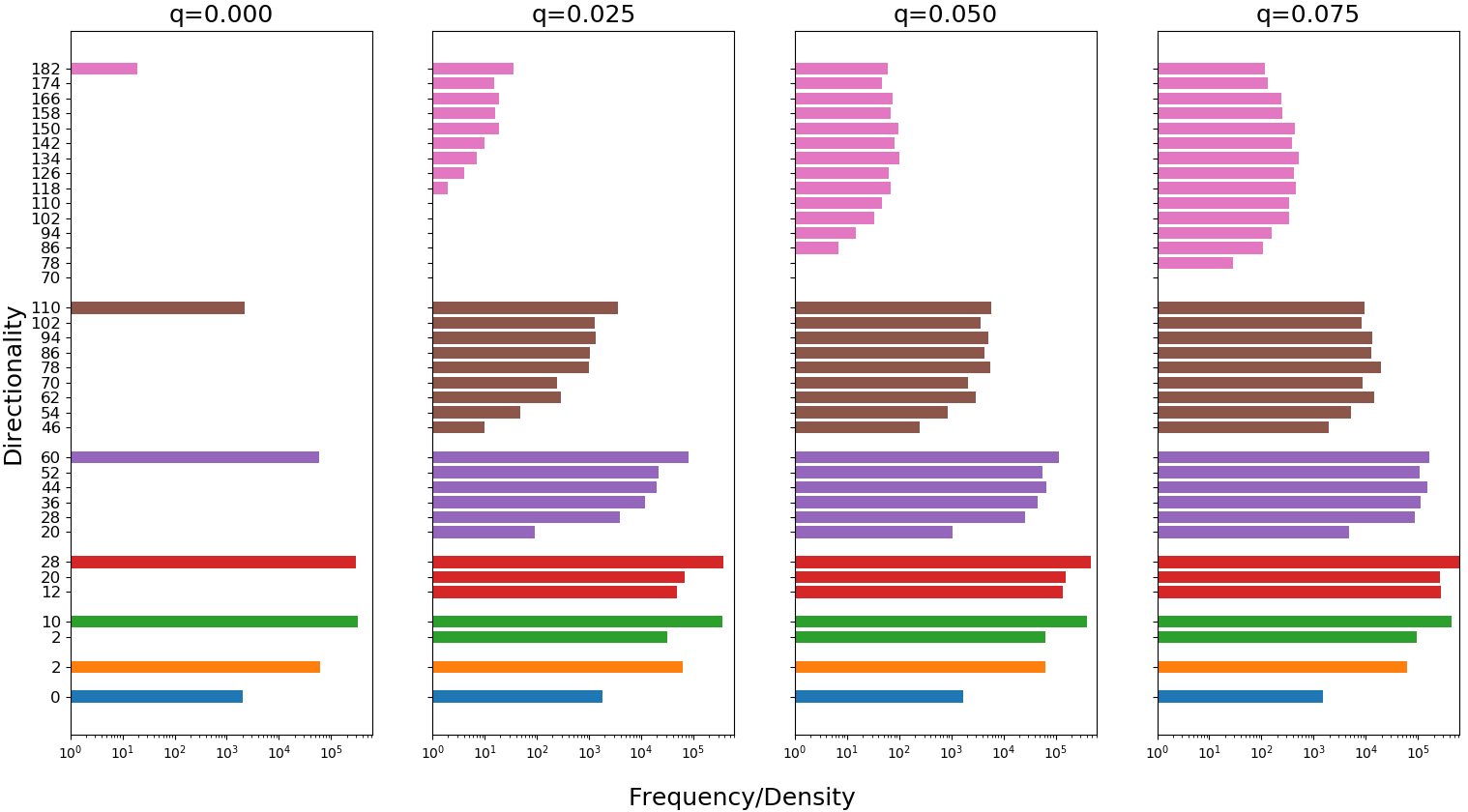}
\caption{\modif{Distribution of local directionalities as a function of the probability parameter $q$, normalised by connection density. The $x$-axis is labeled by Frequency/Density.} The different colours correspond to \modif{the number $n$ of vertices in a tournament, with blue corresponding to a single vertex and pink to 7 vertices. The vertical axis is labeled by the possible local directionality weights in each dimension.}}
\label{fig-ER-Directionality-Dist}
\end{figure}

\begin{algorithm}[ht!]
\caption{Computing local directionality feature matrix $V^d_\mathcal{T}(\mathbb{G})$}\label{alg1}
\renewcommand{\algorithmicrequire}{\textbf{Input:}}
\renewcommand{\algorithmicensure}{\textbf{Output:}}
\renewcommand{\algorithmicprocedure}{\textbf{Procedure:}}
\begin{algorithmic}[1]
\Require A set $\mathbb{G}=\{\calg_1,\ldots,\calg_n\}$ of digraphs, and an integer $d>0$
\Procedure{}{}
\State Compute persistent homology $T^\ell(\calg_i)$ of $\tfl(\calg_i)$ with respect to $W_{\dr}$ for all $\calg_i\in\mathbb{G}$
\State Set $\widehat{T}=\bigcup_{i=1}^n \widehat{T}^\ell(\calg_i)$, and fix an arbitrary ordering $t_1,\ldots,t_r$ on its elements
\State Set $M = (m_{i,j})$ as the $n\times r$ matrix with $m_{i,j}=N_{\calg_i}(t_j)$
\State Compute the standard deviation of each column of $M$ (considered as a set of integers)
\State Let $C_1,\ldots,C_d$ be the $d$ columns with the largest standard deviation
\State Return the $n\times d$ matrix $V^d_\mathcal{T}(\mathbb{G}) $, given by concatenating the columns $C_1\cdots C_d$
\EndProcedure
\end{algorithmic}
\end{algorithm}

Let $V_\beta^6(\mathbb{G}_{ER})$ denote the matrix where the $i$-th row is the vector $V_\beta^6(\calg_i)\defeq[\beta_0,\beta_1,\ldots,\beta_5]$ of Betti numbers
 of the directed flag complex of $\calg_i\in\mathbb{G}_{ER}$.
 Next, we applied $k$-means clustering to the rows of each of the matrices $V_\calt^6(\mathbb{G}_{ER})$ and $V_\beta^6(\mathbb{G}_{ER})$,
 using Python package scikit-learn \cite{scikit}.
 The results are displayed in Figure~\ref{fig:kmeans1}. Comparing the results to the distribution of directionalities in Figure \ref{fig-ER-Directionality-Dist}, one notices that the distribution of transitive tournaments remains roughly the same regardless of $q$. Hence, it is to be expected that the Betti numbers of the directed flag complex will perform poorly in clustering the four families, which is indeed the case. By contrast, the associated tournaplexes, filtered by local directionality give almost perfect separation, which is particularly remarkable in the cases $q=0.05$ and~$q=0.075$  that give very similar distributions in Figure \ref{fig-ER-Directionality-Dist}. This demonstrates that the topology of the tournaplex holds more information about  the orientation of the edges in a digraph, compared to the directed flag complex.

\begin{Rmk}\label{Rem-6}
We select $d=6$ in this computation for two reasons. Firstly, the Betti numbers are always zero for dimension $6$ and above in the directed flag complex. Secondly, for  $k$-means clustering to give reliable results we require that the size of the vectors to be clustered is significantly  smaller than the size of the data set. 
Also, we use $k$-means clustering as it is a simple well known technique, but  similar results are obtained using decision tree learning or linear discriminant analysis, \modif{as verified by Henri Riihim\"aki.}
\end{Rmk}

\begin{figure}[ht!]
\centering
\begin{tikzpicture}[scale=0.3]
\foreach \x in {1,...,50}{
  \ifthenelse{\x=37 \OR \x=38}
  	{\node[diamond,inner sep=0pt,text width=2mm,fill=Blue] at (\x,0){};}
  	{\node[regular polygon,regular polygon sides=3,inner sep=0pt,text width=.8mm,fill=Red] at (\x,0){};}
  \node[diamond,inner sep=0pt,text width=2mm,fill=Blue] at (\x,1){};
  \node[circle,inner sep=0pt,text width=1.5mm,fill=Green] at (\x,2){};
  \node[regular polygon,regular polygon sides=4,inner sep=0pt,text width=.8mm,fill=YellowOrange] at (\x,3){};
}
\node at (25,4) {Tournaplex};
\node at (-2,3) {\tiny $0.000$};
\node at (-2,2) {\tiny $0.025$};
\node at (-2,1) {\tiny $0.050$};
\node at (-2,0) {\tiny $0.075$};
\node at (-4,1.5) {\tiny $q$};
\end{tikzpicture}
\begin{tikzpicture}[scale=0.3]
\foreach \x in {1,...,50}{
  \ifthenelse{\x>14}
  	{\node[regular polygon,regular polygon sides=4,inner sep=0pt,text width=.8mm,fill=YellowOrange] at (\x,0){};}
  	{\ifthenelse{\x=1 \OR \x=2 \OR \x=3 \OR \x=4 \OR \x=9 \OR \x=14}
  		{\node[regular polygon,regular polygon sides=3,inner sep=0pt,text width=.8mm,fill=Red] at (\x,0){};}
  		{\node[diamond,inner sep=0pt,text width=2mm,fill=Blue] at (\x,0){};}}
  \node[regular polygon,regular polygon sides=4,inner sep=0pt,text width=.8mm,fill=YellowOrange] at (\x,1){};
  \ifthenelse{\x>11}
  	{\node[regular polygon,regular polygon sides=4,inner sep=0pt,text width=.8mm,fill=YellowOrange] at (\x,2){};}
  	{\node[circle,inner sep=0pt,text width=1.5mm,fill=Green] at (\x,2){};}
  \node[regular polygon,regular polygon sides=4,inner sep=0pt,text width=.8mm,fill=YellowOrange] at (\x,3){};
}
\node at (25,4) {Directed Flag Complex};
\node at (-2,3) {\tiny $0.000$};
\node at (-2,2) {\tiny $0.025$};
\node at (-2,1) {\tiny $0.050$};
\node at (-2,0) {\tiny $0.075$};
\node at (-4,1.5) {\tiny $q$};
\end{tikzpicture}
\caption{The cluster assignment of $k$-means clustering applied to the columns of the matrices $V^6_\mathcal{T}(\mathbb{G})$ (top)
 and $V^6_\beta(\mathbb{G})$ (bottom),
 where $\mathbb{G}$ is a set of 200 directed Erd\H{o}s-R\'enyi type graphs with parameters $p=0.25$ and varying~$q$. The four colours represent the different clusters, each row corresponds to a different value of $q$.}\label{fig:kmeans1}
\end{figure}
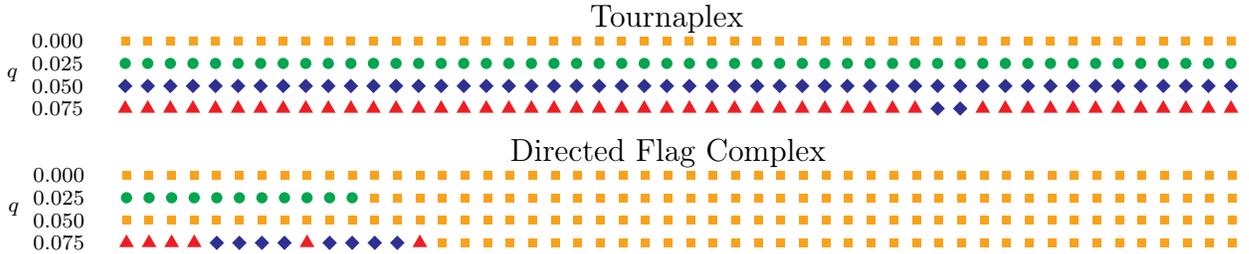

\subsection{Brain activity simulation digraphs}\label{Sec4.2} 
The data in this example was taken from Blue Brain Project's reconstruction of the neocortical column of a juvenile rat \cite{Mega}. The reconstruction is a digital model of brain tissue based on basic biological principles. The particular model  we used contains approximately 31000 digitally simulated neurons with roughly 8 million synaptic connections. There are 55 morphological types of neurons simulated in the column, in correspondence with known biological classification, and the column is built in 6 layers, again following known biological principles. With the possible exception of more recent developments by the Blue Brain Project team, this model is  the most accurate approximation of real brain tissue currently available. In particular it allows researchers to obtain full connectivity data of the neural microcircuitry, and can be stimulated by spike trains that are either recorded in vivo or artificially generated.

The  data set we use here was used in \cite{Fund} to demonstrate the capability of the homology of the directed flag complex to express certain properties of the simulations.  In this data set nine different stimuli were applied to the Blue Brain Project microcircuit. These stimuli, which take the form of spike trains that are injected directly into the digital tissue, can be distinguished by two properties, their \emph{class} and their \emph{grouping}, with three possible values for each property.
 Each class, denoted $5$, $15$ or $30$, represents a different temporal input of the stimulus,
 and each grouping, denoted $a$, $b$ or $c$, represents  a different spatial input of the stimulus.
 See \cite[Figure 4A]{Fund} for further information about the stimuli. Rather than using the entire microcircuit, we extracted spike trains only from L5TTPC1 neurons \cite{Mega}. The code name stands for \hadgesh{Layer 5 Thick Tufted Pyramidal Cell of Type 1}, and these neurons are one of the most prevalent morphological types in the neocortex and are assumed to be of major significance in information processing in the brain. The structural data is publicly available at \cite{rat}. 

 \begin{algorithm}[t]
\caption{Computing local directionality feature matrix $\widehat{V}^m_\mathcal{T}(\mathbb{D})$ for functional data}\label{alg2}
\renewcommand{\algorithmicrequire}{\textbf{Input:}}
\renewcommand{\algorithmicensure}{\textbf{Output:}}
\renewcommand{\algorithmicprocedure}{\textbf{Procedure:}}
\begin{algorithmic}[1]
\Require Spike trains $\mathbb{D}=\{D_1,\ldots,D_n\}$ on a graph $\calg$ and three positive integers $\{m, t_1, t_2\}$
\Procedure{}{}
\For{$i\in\{1,\ldots,n\}$}
\For{$j\in\{1,\ldots,\lceil L/t_1\rceil\}$}
\State Compute transmission-response graphs $\calg_j^i\defeq\calg_j^{D_i}$ with parameters $\{t_1, t_2\}$
\State  Compute  persistent homology $T^\ell(\calg_j^i)$ of~$\tfl(\calg_j^i)$ with respect to $W_{\dr}$
\EndFor
\EndFor
\State Set $\widehat{T}=\cup_{i,j} \widehat{T}^\ell(\calg_j^i)$ and fix an arbitrary ordering $t_1,\ldots, t_l$ on its elements
\State Set $M_j = (m_{j,c}^{i})$ to be the $n\times l$ matrix with $m_{j,c}^{i}=N_{\calg_j^i}(t_c)$
\State Let $M$ be the concatenation of the matrices $M_1, M_2,\ldots, M_{\lceil L/t_1\rceil}$ 
\State Compute the standard deviation of each column of $M$ (considered as a set of integers)
\State Let $C_1,\ldots,C_m$ be the $m$ columns with the largest standard deviation
\State Return the $n\times m$ matrix $\widehat{V}^m_\mathcal{T}(\mathbb{D})$ given by  concatenating the columns $C_1\cdots C_m$
\EndProcedure
\end{algorithmic}
\end{algorithm}

We consider five repetitions of each experiment, referred to as \emph{seed} $s$, for $s=1,\ldots,5$.
 Thus we have $45$ distinct data sets, each consisting of a 250ms spike train, that is a list of pairs
 $(t,g)$ where $t$ is a time (in resolution of 0.1ms) and $g$ is a neuron (vertex) that spiked at  time $t$. 
Each data set is then converted to a sequence of digraphs, using the  \emph{transmission-response} method introduced in \cite{Fund}. The construction relies on three items of input: 1) the underlying structural graph $\calg$, 2) a spike train data set $D$ and 3)  a pair of integers $t_1$ and~$t_2$.  We then split the spike train into time bins of size $t_1$-ms, and construct a graph~$\calg_{r}$ for each time bin, where $\calg_{r}$ has the same vertices of $\calg$
 and the edge $(i,j)$ if all of the following conditions hold:
 \begin{enumerate}[(a)]
 \item $(i,j)$  is an edge in $\calg$,
 \item neuron $i$  fired at time $t_0$ in the $r$-th time bin, and
 \item neuron $j$  fired at time $t_0<t\le t_0+t_2$.
 \end{enumerate}
 See \cite{Fund} for further background on transmission-response graphs. 

\begin{algorithm}[t]
\caption{Computing Betti number feature vector $\widehat{V}^m_\beta(\mathbb{D})$ for functional data}\label{alg3}
\renewcommand{\algorithmicrequire}{\textbf{Input:}}
\renewcommand{\algorithmicensure}{\textbf{Output:}}
\renewcommand{\algorithmicprocedure}{\textbf{Procedure:}}
\begin{algorithmic}[1]
\Require Spike trains $\mathbb{D}=\{D_1,\ldots,D_n\}$ on a graph $\calg$ and four positive integers $\{d, m, t_1, t_2\}$
\Procedure{}{}
\For{$i\in\{1,\ldots,n\}$}
\For{$j\in\{1,\ldots, \lceil L/t_1\rceil\}$}
\State Compute transmission-response graphs $\calg_j^i\defeq\calg_j^{D_i}$, with parameters $\{t_1,t_2\}$
\EndFor
\EndFor
\State Set $M_j$ as the $n\times m$ matrix with the $i$-th row as $V_\beta^d(\calg_j^i)$
\State Let $M$ be the concatenation of the matrices $M_1, M_2,\ldots, M_{\lceil L/t_1\rceil}$
\State Compute the standard deviation of each column of $M$
\State Let $C_1,\ldots,C_m$ be the $m$ columns with the largest standard deviation
\State Return the $n\times m$ matrix $\widehat{V}^m_\beta(\mathbb{D})$ given by concatenating the columns $C_1\cdots C_m$
\EndProcedure
\end{algorithmic}
\end{algorithm}
 
A typical collection $\mathbb{D}$ of spike train data sets consists of $n$ distinct instantiations, denoted by $D_r$, $r=1,\ldots, n$, each of length L ms. Out of this collection we produce an $n\times m$ feature matrix $\widehat{V}^m_\mathcal{T}(\mathbb{D})$, for some natural number $m$. The rows of this matrix are then fed  into a $k$-means clustering algorithm. To obtain  $\widehat{V}^m_\mathcal{T}(\mathbb{D})$ we apply Algorithm~\ref{alg2}, which is a similar procedure to Algorithm~\ref{alg1}, but is adapted for use with functional data. As noted above, for the collection $\mathbb{D}$ at our disposal we have $n=45$ and $L=250$, and we set $m=6$ so that the size of our vectors is suitable compared to the size of the data set (see Remark \ref{Rem-6}). 

For the directed flag complex we compute $\widehat{V}^m_\beta(\mathbb{D})$ by applying Algorithm~\ref{alg3}, which is very similar to Algorithm~\ref{alg2}, but using the Betti numbers instead of persistent pairs. We only use the first three Betti numbers ($d=3$ in the algorithm), as in this data set all higher Betti numbers were zero.

Once we have computed both $\widehat{V}^6_\beta(\mathbb{D})$ and $\widehat{V}^6_\mathcal{T}(\mathbb{D})$,  we apply $k$-means clustering to their rows.
 The results of this are displayed in Figure~\ref{fig:kmeans2}.
  Here too one sees that the data is split  into three classes almost perfectly using $k$-means clustering on the corresponding flag tournaplexes filtered by local directionality, while applying the analogous clustering method on the  Betti numbers of the  directed flag complexes yields poorer separation. This suggests once more that the tournaplex construction stores different, and possibly more detailed information about the network than the directed flag complex.

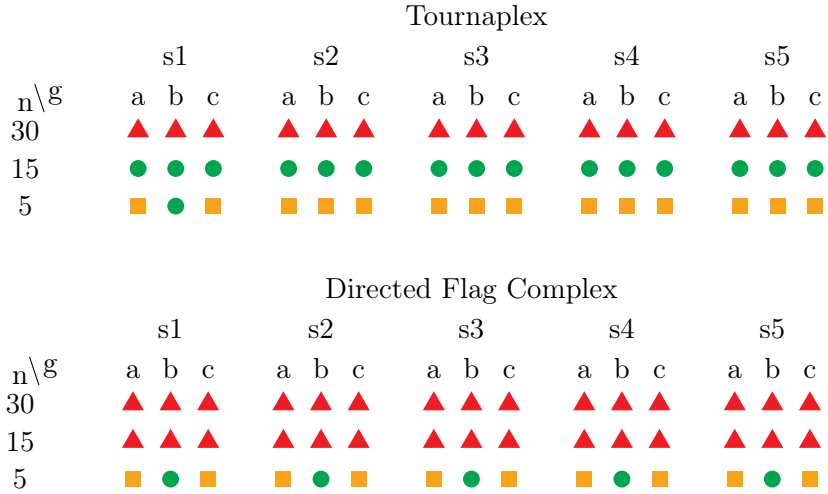
\begin{figure}[t]
\centering
\begin{tikzpicture}[scale=0.5]
\foreach \x in {1,...,19}{
  \ifthenelse{\x=4 \OR \x=8 \OR \x=12 \OR \x=16}{}{
  \ifthenelse{\x=2}
  	{\node[circle,inner sep=0pt,text width=2.25mm,fill=Green] at (\x,0){};}	
  	{\node[regular polygon,regular polygon sides=4,inner sep=0pt,text width=1.5mm,fill=YellowOrange] at (\x,0){};}
  \node[circle,inner sep=0pt,text width=2.25mm,fill=Green] at (\x,1){};
  \node[regular polygon,regular polygon sides=3,inner sep=0pt,text width=1.2mm,fill=Red] at (\x,2){};
}
}
\foreach \x in {1,5,9,13,17}{\node[above] at (\x,2.4) {\small a};}
\foreach \x in {2,6,10,14,18}{\node[above] at (\x,2.4) {\small b};}
\foreach \x in {3,7,11,15,19}{\node[above] at (\x,2.4) {\small c};}
\node at (-2,0) {\small 5};
\node at (-2,1) {\small 15};
\node at (-2,2) {\small 30};
\node at (2,4) {\small s1};\node at (6,4) {\small s2};\node at (10,4) {\small s3};\node at (14,4) {\small s4};\node at (18,4) {\small s5};
\draw[black] (-1.5,2.5) to (-1.75,3.25);\node at (-2,2.7) {\small n};\node[above] at (-1.2,2.4) {\small g};
\node at (10,5) {\small Tournaplex};
\end{tikzpicture}

\vskip 15pt

\begin{tikzpicture}[scale=0.5]
\foreach \x in {1,...,19}{
  \ifthenelse{\x=4 \OR \x=8 \OR \x=12 \OR \x=16}{}{
  	\ifthenelse{\x=2 \OR \x=6 \OR \x=10 \OR \x=14 \OR \x=18}
  		{\node[circle,inner sep=0pt,text width=2.25mm,fill=Green] at (\x,0){};}
  		{\node[regular polygon,regular polygon sides=4,inner sep=0pt,text width=1.5mm,fill=YellowOrange] at (\x,0){};}
  \node[regular polygon,regular polygon sides=3,inner sep=0pt,text width=1.2mm,fill=Red] at (\x,1){};
  \node[regular polygon,regular polygon sides=3,inner sep=0pt,text width=1.2mm,fill=Red] at (\x,2){};
}
}
\foreach \x in {1,5,9,13,17}{\node[above] at (\x,2.4) {\small a};}
\foreach \x in {2,6,10,14,18}{\node[above] at (\x,2.4) {\small b};}
\foreach \x in {3,7,11,15,19}{\node[above] at (\x,2.4) {\small c};}
\node at (-2,0) {\small 5};
\node at (-2,1) {\small 15};
\node at (-2,2) {\small 30};
\node at (2,4) {\small s1};\node at (6,4) {\small s2};\node at (10,4) {\small s3};\node at (14,4) {\small s4};\node at (18,4) {\small s5};
\draw[black] (-1.5,2.5) to (-1.75,3.25);\node at (-2,2.7) {\small n};\node[above] at (-1.2,2.4) {\small g};
\node at (10,5) {\small Directed Flag Complex};
\end{tikzpicture}
\caption{The cluster assignment of $k$-means clustering applied to $\widehat{V}^6_\mathcal{T}(\mathbb{D})$ (top) and $\widehat{V}^6_\beta(\mathbb{D})$ (bottom),
 where $\mathbb{D}$ is a set of spike trains on the Blue Brain microcircuit reconstruction.}\label{fig:kmeans2}
\end{figure}
\bigskip

On behalf of all authors, the corresponding author states that there is no conflict of interest.


\begin{thebibliography}{10}

\bibitem{Alway}
G.~Alway.
\newblock The distribution of the number of circular triads in paired
  comparisons.
\newblock {\em Biometrika}, 49(1/2):265--269, 1962.

\bibitem{EH}
H.~Edelsbrunner and J.~Harer.
\newblock Persistent homology - A survey.
\newblock {\em Contemporary mathematics}, 453:257--282, 2008.

\bibitem{Govc}
D.~Govc.
\newblock Computing homotopy types of directed flag complexes.
\newblock {\em In preparation}, 2020.

\bibitem{Gplus}
J.~Leskovec.
\newblock Social circles: Google+.
\newblock \url{http://snap.stanford.edu/data/egonets-Gplus.html}, 2020.

\bibitem{KBS}
M.~G. Kendall and B. Babington Smith.
\newblock On the method of paired comparisons.
\newblock {\em Biometrika}, 31(3/4):324--345, 1940.

\bibitem{Landau}
H.~G. Landau.
\newblock On dominance relations and the structure of animal societies: I.
  Effect of inherent characteristics.
\newblock {\em The bulletin of mathematical biophysics}, 13(1):1--19, 1951.

\bibitem{Flagser-code}
D.~L\"utgehetmann.
\newblock Flagser.
\newblock \url{https://github.com/luetge/flagser}, 2020.

\bibitem{Flagser}
D.~L\"utgehetmann, D.~Govc, J.~P. Smith, and R.~Levi.
\newblock Computing persistent homology of directed flag complexes.
\newblock {\em Algorithms}, 13(1):19, 2020.

\bibitem{Mac}
Y.~Takahata.
\newblock Diachronic changes in the dominance relations of adult female japanese monkeys of the arashiyama b group.
\newblock {\em The monkeys of Arashiyama. State University of New York Press, Albany}, pages 123--139, 1991.
  
\bibitem{Mega}
H.~Markram, E.~Muller, S.~Ramaswamy, M.~W. Reimann, M.~Abdellah, C.~A. Sanchez,
  A.~Ailamaki, L.~Alonso-Nanclares, N.~Antille, S.~Arsever, et~al.
\newblock Reconstruction and simulation of neocortical microcircuitry.
\newblock {\em Cell}, 163 (2):456--492, 2015.

\bibitem{Moon}
J.~W. Moon.
\newblock {\em Topics on tournaments in graph theory}.
\newblock Courier Dover Publications, 2015.

\bibitem{Moran}
P.~Moran.
\newblock On the method of paired comparisons.
\newblock {\em Biometrika}, 34(3/4):363--365, 1947.

\bibitem{Abdn-NT}
Aberdeen University Neuro-Topology Group.
\newblock Network data.
\newblock \url{https://homepages.abdn.ac.uk/neurotopology/networks.html}, 2020.

\bibitem{Protein}
R.~M. Ewing, P.~Chu, F.~Elisma, H.~Li, P.~Taylor, S.~Climie,
  L.~McBroom-Cerajewski, M.~D. Robinson, L.~O'Connor, M.~Li, et~al.
\newblock Large-scale mapping of human protein--protein interactions by mass
  spectrometry.
\newblock {\em Molecular systems biology}, 3(1):89, 2007.

\bibitem{scikit}
F.~Pedregosa, G.~Varoquaux, A.~Gramfort, V.~Michel, B.~Thirion, O.~Grisel,
  M.~Blondel, P.~Prettenhofer, R.~Weiss, V.~Dubourg, et~al.
\newblock Scikit-learn: Machine learning in Python.
\newblock {\em Journal of machine learning research}, 12(Oct):2825--2830, 2011.

\bibitem{rat}
Blue Brain Project.
\newblock Digital reconstruction of neocortical microcircuitry.
\newblock \url{https://bbp.epfl.ch/nmc-portal/downloads}, 2020.

\bibitem{Fund}
M.~W. Reimann, M.~Nolte, M.~Scolamiero, K.~Turner, R.~Perin, G.~Chindemi,
  P.~D{\l}otko, R.~Levi, K.~Hess, and H.~Markram.
\newblock Cliques of neurons bound into cavities provide a missing link between
  structure and function.
\newblock {\em Frontiers in computational neuroscience}, 11:48, 2017.

\bibitem{Rubinov-Sporns} M.~Rubinov, O.~Sporns, {\it Complex network measures of brain connectivity: Uses and interpretations}, NeuroImage,  52 (3),     1059--1069, 2010.


\bibitem{Schiller-et-al}
B.~Schiller, S.~Jager, K.~Hamacher, and T.~Strufe.
\newblock StreaM - A stream-based algorithm for counting motifs in dynamic
  graphs.
\newblock In {\em International Conference on Algorithms for Computational
  Biology}, pages 53--67. Springer, 2015.

\bibitem{Tournser-code}
J.~P. Smith.
\newblock Tournser.
\newblock \url{https://github.com/JasonPSmith/tournser}, 2020.

\bibitem{Tournser-live}
J.~P. Smith.
\newblock Tournser Live.
\newblock \url{https://homepages.abdn.ac.uk/neurotopology/tournser.html}, 2020.

\bibitem{Flagser-live}
J.~P. Smith.
\newblock Flagser Live.
\newblock \url{https://homepages.abdn.ac.uk/neurotopology/flagser.html}, 2020.

\bibitem{celegans}
L.~R. Varshney, B.~L. Chen, E.~Paniagua, D.~H. Hall, and D.~B. Chklovskii.
\newblock Structural properties of the caenorhabditis elegans neuronal network.
\newblock {\em PLoS computational biology}, 7(2), 2011.

\end{thebibliography}

 \end{document}